\documentclass[12pt,a4paper,reqno]{amsart}
\usepackage{epsfig}
\usepackage{mathabx}
\usepackage[left=2.7cm, right=2.7cm, top=2.6cm, bottom=2.6cm]{geometry}

\def\be{\begin{equation}}
\def\ee{\end{equation}}
\def\bea{\begin{eqnarray}}
\def\eea{\end{eqnarray}}
\def\bes{\begin{eqnarray*}}
\def\ees{\end{eqnarray*}}

\def\nn{\nonumber}
\def\lb{\label}

\def\R{{\bf R}}
\def\C{{\bf C}}
\def\Z{{\bf Z}}

\def\N{{\bf N}}

\def\Q{{\bf Q}}

\def\aa{{\alpha}}
\def\bb{{\beta}}
\def\ga{{\gamma}}

\def\th{{\theta}}

\def\om{{\omega}}
\def\Om{{\Omega}}

\def\sg{{\sigma}}
\def\dm{{\diamond}}

\def\vf{{\varphi}}

\def\<{{\langle}}
\def\>{{\rangle}}

\def\im{{\rm im}}

\def\Sp{{\rm Sp}}

\def\ol{\overline}

\def\hb{\vrule height0.18cm width0.14cm $\,$}

\newtheorem{thm}{Theorem}[section]

\newtheorem{lem}[thm]{Lemma}
\newtheorem{pro}[thm]{Proposition}

\theoremstyle{definition}
\newtheorem{rem}[thm]{Remark}

\newtheorem{exa}[thm]{Example}

\makeatletter
\@namedef{subjclassname@2020}{\textup{2020} Mathematics Subject Classification}
\makeatother

\title[Stability of closed characteristics and invariant sets]{Stability of closed characteristics and invariant sets on star-shaped hypersurfaces}
\author[Huagui Duan]{Huagui Duan}
\thanks{Huagui Duan was partially supported by NNSFC (Nos. 12271268 and 12361141812) and Natural Science Foundation
of Tianjin (No. 25JCZDJC01030). Zihao Qi was partially supported by NNSFC (Nos. 12271268 and 125B200041).}
\address{Huagui Duan, School of Mathematics, Jilin University, Changchun, Jinlin, 130012, The People's Republic of China}

\email{duanhg@nankai.edu.cn.}

\author[Zihao Qi]{Zihao Qi}
\address{Zihao Qi, School of Mathematical Sciences, Nankai University, Tianjin 300071, The People's Republic of China}
\email{1120230029@mail.nankai.edu.cn.}
\subjclass[2020]{37J12, 37J55, 53D40}
\keywords{Closed characteristics, Floer homology, Stability, Invariant Set}
\date{2026-08-02}

\begin{document}
\maketitle

\begin{abstract}
{\it  This paper focuses on the stability of closed characteristics and their nearby invariant sets on compact star-shaped hypersurfaces in $\R^{2n}$ with finitely many simple closed characteristics. Firstly, it is proved that all closed characteristics are non-hyperbolic, under some index condition weaker than dynamical convexity. Secondly, when $n=3$, it is proved that all closed characteristics are elliptic when their number is exactly $3$, under non-degeneracy and some minor index condition. Lastly, it is proved that all simple closed characteristics are either degenerate at some iteration or not locally maximal under dynamical convexity. }
\end{abstract}

\renewcommand{\theequation}{\thesection.\arabic{equation}}
\renewcommand{\thefigure}{\thesection.\arabic{figure}}

\setcounter{figure}{0}
\setcounter{equation}{0}
\section{Introduction and main result}
For a contact manifold $M$ equipped with a contact form $\alpha$, one can associate the {\it Reeb vector field} $X$, satisfying $ \alpha(X)=1, d\alpha(X,\cdot)=0 $. A closed Reeb orbit means the orbit of $X$ that closes up. In this paper, we mainly consider contractible closed Reeb orbits. Such an orbit is called {\it simple} if it is not a multiple covering (i.e., iteration) of any other contractible closed Reeb orbit. Here, the $m$-th iteration of a closed Reeb orbit $x:\R/T\Z\to M$ is defined by $x^m:\R/mT\Z\to M(t\mapsto x(t))$. Two simple closed Reeb orbits $x$ and $y$ are considered {\it distinct} if there does not exist $\th\in (0,T)$ such that $x(t)=y(t+\th)$ for all $t\in\R$. We shall omit the word {\it distinct} when we talk about more than one simple closed Reeb orbit. A closed Reeb orbit is {\it non-degenerate} if $1$ is not the eigenvalue of its linearized Poincar\'{e} map. A simple closed Reeb orbit is {\it strongly non-degenerate} if all its iterations are non-degenerate. A contact form is {\it non-degenerate} if all its closed Reeb orbits are non-degenerate.

In this paper, we mainly focus on starshaped hypersurfaces $\Sigma$ in $\R^{2n}$ endowed with the standard contact form. It is well known that it is strictly contactomorphic to the standard $(2n-1)$ dimensional contact sphere. We will call its closed Reeb orbit a {\it closed characteristic} below. A longstanding conjecture called multiplicity conjecture asserts that for star-shaped hypersurfaces in $\R^{2n}$, the number of simple closed characteristics is at least $n$. This conjecture is widely open without any restriction except in $\R^4$, see \cite{CHHL24}. Some progress are made given some non-degeneracy or index conditions such as dynamical convexity, cf. \cite{CGG25,CHHL24,DLLW24,DLLQW25,DX25} for some recent progress. In \cite{HWZ03}, Hofer, Wysocki and Zehnder proved for a broad class of star-shaped hypersurfaces in $\R^4$ that the number of simple closed characteristics is either two or infinite and conjectured this is true for every star-shaped hypersurface in $\R^4$.
The so-called {\it $n$-or-infinity conjecture} states that the number of simple closed characteristics is either $n$ or infinite for every star-shaped hypersurface in $\R^{2n}$, cf. \cite{CGG25,CHHL24,DLR22} for recent progress. The well-known example supporting this conjecture is the irrational ellipsoid, which has exactly $n$ simple closed characteristics.

A closed Reeb orbit is {\it hyperbolic} if no eigenvalue of its linearized Poincar\'{e} map is on the unit circle, {\it elliptic} if all eigenvalues of its linearized Poincar\'{e} map are on the unit circle and {\it irrationally elliptic} if its linearized Poincar\'{e} map can be represented by the direct sum of $2\times 2$ irrational rotations in a symplectic frame. A contact form $\alpha$ on $S^{2n-1}$ is called {\it dynamically convex} if every closed Reeb orbit has Conley-Zehnder index at least $n+1$.

In \cite{WHL07}, Wang, Hu and Long proved that all closed characteristics on a strictly convex hypersurface in $\R^{4}$ are irrationally elliptic when there are only finitely many simple closed characteristics, and conjectured that this is true in $\R^{2n}$. Note that all closed characteristics on irrational ellipsoids are irrationally elliptic. In \cite{CHHL24}, the same result still holds for star-shaped hypersurfaces in $\R^{4}$, so one tends to believe that similar phenomenon happens for star-shaped hypersurfaces in $\R^{2n\ge6}$, with the knowledge that it happens for all known examples of star-shaped hypersurfaces with finitely many simple closed characteristics. Recently, it is proved in \cite{LLW25} that, under dynamical convexity, there are at least two irrationally elliptic simple closed characteristics when there are exactly $n$ simple closed characteristics. Actually, the assumption can be relaxed to that there are finitely many simple closed characteristics.

If we only detect non-hyperbolic closed characteristics, the stronger conclusions can be obtained. For example, it was shown in \cite{CGGM26} that, under possibly weaker index conditions than dynamical convexity, all closed characteristics are non-hyperbolic when there are finitely many simple closed characteristics. In fact, this index condition requires different lower bounds for each closed Reeb orbit $x$ according to the $1$-algebraic multiplicity of its linearized Poincar\'{e} map, denoted by $m_1(x)$. Specially, when there is a totally degenerate closed Reeb orbit, i.e., all eigenvalues of its linearized Poincar\'{e} map are equal to $1$, the index assumption is being no less than $n+1$. Our first result is to show Theorem A of \cite{CGGM26} still holds under the more weaker index condition.

\begin{thm}\label{weak condition non-hyperbolic}
Assume that a compact star-shaped hypersurface in $\R^{2n\ge6}$ has only finitely many simple closed characteristics. If every closed characteristic $x$ with positive mean index has (lower) Conley-Zehnder index no less than $\min\{\max\{3,2+\frac{m_1(x)}{2}\},n\}$, then all of them are non-hyperbolic. In particular, the result holds for all closed characteristics if their underlying simple closed orbits have index no less than $n$.
\end{thm}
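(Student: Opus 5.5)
We argue by contradiction, following the scheme of Theorem A of \cite{CGGM26} but using a sharper local Floer homology estimate. Suppose $\Sigma$ has finitely many simple closed characteristics $x_1,\dots,x_q$ and that some closed characteristic is hyperbolic. Since hyperbolicity of $x^m$ is equivalent to hyperbolicity of $x$, we may take a simple closed characteristic $y=x_1$ that is hyperbolic. Then every iterate $y^m$ is non-degenerate and has $\hat i(y)\neq 0$, and the index grows linearly. For a hyperbolic orbit, $i(y^m)=m\,\hat i(y)$ exactly, up to the parity twist for negative hyperbolic orbits. Moreover, the local equivariant symplectic homology of $y^m$ is one-dimensional, concentrated in degree $m\hat i(y)$, and vanishes for bad iterates.

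The global input is the equivariant symplectic homology of the ball, $SH^{S^1,+}_*(\Sigma)$. Since $\Sigma$ is star-shaped, this is $\Q$ in each degree $n+1+2k$, $k\ge 0$, and $0$ otherwise. The first step is to invoke the resonance and identity relations available for finitely many orbits. These are the mean Euler characteristic identity of Ginzburg--Kerman type, $\sum_j \hat\chi(x_j)/\hat i(x_j)=1/2$, together with the common index jump theorem of Long--Zhu. The latter is applied to the finite family $x_1,\dots,x_q$ with positive mean indices. Orbits with $\hat i=0$ do not contribute to high degrees, and the index hypothesis together with the growth of indices shows that all contributions in large degrees come from orbits of positive mean index.

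The common index jump theorem produces iterates $m_j$ with $2N-\frac{e(P_j)}{2}\le i(x_j^{2m_j})$ and $i(x_j^{2m_j})+\nu(x_j^{2m_j})\le 2N+\frac{e(P_j)}{2}$, with a gap below $2N-\frac{e}{2}$ and above $2N+\frac{e}{2}$. We then use local Floer homology support bounds: $HF^{loc}_*(x^m)$ is supported in $[i(x^m),\,i(x^m)+\nu(x^m)]$, and by the refined estimate it is supported in a window of width governed by $m_1$ rather than $2n-2$. Here is where the weaker hypothesis enters. The lower bound $i(x)\ge \max\{3,2+m_1(x)/2\}$, capped by $n$, ensures via Bott's iteration formula and the splitting numbers that $i(x^{m})$ for $m<2m_j$ stays strictly below $2N-1$, and $i(x^{m})+\nu(x^m)$ for $m>2m_j$ stays strictly above $2N+1$. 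In this sense the hypothesis plays exactly the role that $n+1$ played in \cite{CGGM26} when $m_1=2n-2$.

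Consequently, the global groups in degrees $2N-1+n-1$ and $2N+1+n-1$, suitably shifted, must be generated by the iterates $x_j^{2m_j}$ alone. Only $q$ such iterates are available, and one must fill nonzero classes at both degrees $2N+n\pm 1$ (after the standard shift). For the hyperbolic $y$, the iterate $y^{2m_1}$ is non-degenerate, so it contributes exactly one generator in the single degree $2m_1\hat i(y)=2N$. This is the wrong parity relative to the odd-shift nonvanishing degrees, or more precisely it is in a degree where the homology is zero. Counting ranks, using the Morse inequality together with a second index-jump vector (choose $N$ with the "$+$" and "$-$" variants as in Duan--Long--Wang), then gives more required generators than available iterates, which is a contradiction.

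The main obstacle is the middle step: proving the sharpened localization of $HF^{loc}(x^m)$ in terms of $m_1(x)$, and checking that the hypothesis $\min\{\max\{3,2+m_1/2\},n\}$ suffices. I would first try to bound $\nu(x^m)$ and the support shift by $m_1(x^m)\le m_1(x)$-type arguments, using that the totally degenerate block has a shifted local homology (via a Darboux normal form and a persistence/shift argument). The cap at $n$ comes from the fact that the support width never exceeds $n-1$ on either side. The final clause of the theorem then follows because an index of at least $n$ for the simple orbit propagates to all iterates with positive mean index, by monotonicity of the Bott formula.
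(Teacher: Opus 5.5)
You say you follow \cite{CGGM26}, but the contradiction you actually write down is a global rank count in $\mathrm{SH}^{S^1}$ via the common index jump theorem. That cannot work for an arbitrary finite number of simple orbits. The generator $y^{2m_1}$ in degree $2N$ only needs a differential partner in degree $2N\pm1$. The iterates $x_i^{2m_i}$, $i\neq 1$, which the jump theorem only places in a window of width about $2n$ around $2N$, may well sit there. So no inequality of the form ``more required generators than available iterates'' follows once the number of orbits is not fixed. (Also, for odd $n$ the degree $2N$ is not one where the homology vanishes.)

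What makes the argument of \cite{CGGM26} and of the paper work is an analytic input missing from your proposal. The first ingredient is the Crossing Energy Theorem: a lower bound $\sigma>0$, uniform in the iteration, on the energy of Floer cylinders asymptotic to $\tilde{x}_1^{2m_1}$. The second is Theorem 4.3 of \cite{CGGM26}, an action bound $C$. With these:
\begin{itemize}
\item the partners $x_i^{2m_i}$ are excluded by \emph{action}, since by Lemma 4.1(i) the action difference is either $<\sigma$ or $>C$;
\item only the iterates $x_i^j$ with $j\neq 2m_i$ need a \emph{degree} estimate, namely that $\mathrm{supp}\,\mathrm{HF}(\tilde{x}_i^j)$ has distance at least $2$ from $|\hat{x}_1^{2m_1}|=2N+1$;
\item the filtered-complex argument (Cases 1 and 2) then produces an orbit with action in $[a-C,a-\sigma]$ or $[a+\sigma,a+C]$ and local homology in degree $2N$ or $2N+2$, which Lemma 4.1 forbids.
\end{itemize}

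Second, your index step is misstated, and the borderline case is not handled. For $j<2m_i$ you must bound the \emph{top} $\mu_+(x_i^j)+1$ of the window $[\mu_-(x_i^j),\mu_+(x_i^j)+1]$ above by $2N-1$. For $j>2m_i$ you must bound the \emph{bottom} $\mu_-(x_i^j)$ below by $2N+3$. You have these reversed. The second bound is $\mu_-(x_i^{2m_i+1})=2N+\mu_-(x_i)\ge 2N+3$, which is where the $3$ in the hypothesis enters. The first is plain index arithmetic: $\mu_+(x_i^{2m_i-1})+1=2N-\mu_-(x_i)-p_{-,i}+p_{+,i}+1$ with $p_{+,i}\le m_1(x_i)/2$. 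No ``refined localization governed by $m_1$'' is needed.

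The real difficulty is the totally degenerate case. There the hypothesis only gives $\mu_-(x_i)\ge n$, not $n+1$, contrary to your remark. With $\mu_-(x_i)=n$ and $p_{+,i}=n-1$, the window reaches $2N$, at distance $1$ from $2N+1$. No iteration formula removes this, and it is exactly the improvement over \cite{CGGM26}. The paper instead shows $2N\notin\mathrm{supp}\,\mathrm{HF}(\tilde{x}_i^{2m_i-1})$:
\begin{itemize}
\item here $\hat{\mu}(x_i^{2m_i-1})=2N-n$;
\item by $\mathrm{SH}=\mathrm{SH}^{S^1}\oplus\mathrm{SH}^{S^1}[-1]$, a nonzero $\mathrm{SH}_{2N}(x_i^{2m_i-1})$ forces one of two things;
\item either $\mathrm{SH}_{2N+1}(x_i^{2m_i-1})\neq0$, which violates the support bound $[\mu_-,\mu_++1]$;
\item or $\mathrm{SH}^{S^1}_{\hat{\mu}+n-1}(x_i^{2m_i-1})\neq0$ along admissible iterates, which makes $x_i$ a symplectically degenerate maximum and gives infinitely many orbits by \cite{GHHM13}.
\end{itemize}
Your ``Darboux normal form / persistence shift'' sketch does not supply this.
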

\begin{rem}\label{index-notation}
The Conley-Zehnder index is defined by viewing a closed characteristic as a closed Reeb orbit on the star-shaped hypersurface rather than a Hamiltonian periodic orbit in $\R^{2n}$.
\end{rem}
A star-shaped hypersurface with finitely many simple closed characteristics is also called a pseudo-rotation. Due to the $n$-or-infinity conjecture, for the study of dynamical behaviors of a pseudo-rotation, it is valuable to study star-shaped hypersurfaces in $\R^{2n}$ with exactly $n$ simple closed characteristics. For such special star-shaped hypersurface, it is obtained in \cite{CGG25} that all closed characteristics are irrationally elliptic under non-degeneracy, dynamical convexity and some non-resonance condition, although such non-resonance condition does not hold for every irrational ellipsoids, see the example in Section \ref{Proof}. Since $3$-dimensional case is well solved as mentioned before, we will drop the non-resonance condition and largely weaken the index condition to obtain that all closed characteristics are elliptic for star-shaped hypersurfaces in $\R^6$.

In \cite{DLLQW25}, multiplicity conjecture on star-shaped hypersurfaces in $\R^6$ has been proved under non-degeneracy and some very minor index conditions such as that no simple closed Reeb orbit has Conley-Zehnder index $1$. Note that the index there equals to the index minus one in this paper, see Remark \ref{index-notation}. Under the same condition, we will analyse the ellipticity of closed characteristics and invariant sets near them when there are exactly $3$ simple closed characteristics. Here a closed Reeb orbit is {\it locally maximal}, if it has a neighborhood in which there exists no other invariant set. A typical example is the hyperbolic orbit.

\begin{thm}\label{3elliptic}
Assume that a compact star-shaped hypersurface in $\R^6$ is non-degenerate and no simple closed characteristic has Conley-Zehnder index $1$, then all of them are elliptic and not locally maximal, when there are exactly $3$ simple closed characteristics. Moreover, two of them are irrationally elliptic.
\end{thm}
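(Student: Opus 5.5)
We plan to obtain Theorem \ref{3elliptic} from the resonance identities and index counting for non-degenerate star-shaped hypersurfaces in $\R^6$, in the form used in \cite{DLLQW25}. Let $x_1,x_2,x_3$ be the three simple closed characteristics. Since the hypersurface is non-degenerate, every iterate $x_j^m$ contributes to the equivariant symplectic homology, or the $S^1$-equivariant Floer homology of the starshaped domain, only in the degree $\mu(x_j^m)$, possibly modulo bad orbits. This homology is $\Q$ in each degree $n+1+2k=4+2k$ for $k\ge0$ and vanishes otherwise; here we use the index normalization of Remark \ref{index-notation}. The hypothesis that no simple orbit has index $1$ will be used to exclude low-degree degeneracies in the count.

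The first step is to show that all $x_j$ have odd-parity even index and that all iterates are good. Following \cite{DLLQW25}, the index condition together with non-degeneracy gives $\hat\mu(x_j)>0$ for every $j$. With exactly $3$ orbits, the generators must match the homology degree by degree. The second step is to combine the mean index identity
$$\sum_{j=1}^3 \frac{\hat\chi(x_j)}{\hat\mu(x_j)}=\frac12,$$
where $\hat\chi$ is the mean Euler characteristic, with the Morse inequalities, in fact Morse equalities. Together these show that $\mu(x_j^m)$ has the parity of $n+1$ for all $m$ and $j$. The counting of indices in each interval $[4,4+2K]$ then forces $\sum_j 1/\hat\mu(x_j)=1/2$ together with an exact equidistribution of $\{\mu(x_j^m)\}$ over the even integers $\ge4$. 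This is the same structure as for an ellipsoid.

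Next we exclude hyperbolic or partially hyperbolic factors. If the linearized Poincar\'e map of $x_j$ had a hyperbolic $2\times2$ block or a Krein-indefinite quadruple, the Bott-type iteration formula would give
$$\mu(x_j^m)=m\hat\mu(x_j)+O(1),$$
with the fractional part structure of a rational-splitting orbit. Combined with the parity condition, this forces the eigenvalues to be real positive or off the circle. The splitting numbers of hyperbolic blocks contribute nothing, so $\mu(x_j^m)-m\hat\mu(x_j)$ is bounded and, by non-degeneracy, constant along subsequences. The exact counting function of the even integers, namely that there is exactly one generator per even degree, then turns into a Beatty-type partition statement for $\{\lfloor m\hat\mu(x_j)\rfloor\}$. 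Here we will use a Rayleigh–Beatty style lemma for three sequences: a partition of the integers by three sequences $\lfloor m\alpha_j+\beta_j\rfloor$ forces at least two of the $\alpha_j$ to be irrational, and forces rotational contributions in each orbit. Each $x_j$ is a $4\times 4$ symplectic map, i.e.\ two blocks. Having a hyperbolic block in $x_j$ would make one rotation number vanish, and the partition would then fail. This failure comes from comparing $\mu(x_j^m)$ jumps, using Long's precise iteration formula. This step, excluding hyperbolic blocks via the Beatty partition, is the main obstacle. I would first try the approach of \cite{LLW25}/\cite{CGG25}: write $\mu(x_j^m)=2\sum_i\lfloor m\theta_{j,i}\rfloor + c_jm+d_j$ and analyze the densities.

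Finally, for irrational ellipticity of two orbits, the Beatty lemma gives irrational rotation numbers, and non-degeneracy of all iterates excludes rational ones only partially. For two of the orbits we will get both rotation angles irrational, which is the analogue of \cite{LLW25}. For local non-maximality we will use the elliptic, strongly non-degenerate structure. For an elliptic orbit with an irrational rotation, the local Floer homology of its iterates is nonzero in unbounded sets of degrees. An isolated invariant-set argument as in \cite{CGGM26} then shows that if $x_j$ were locally maximal, some iterate would carry a local homology incompatible with the global Floer homology count. Alternatively, a Conley-index argument shows that a locally maximal orbit must have a hyperbolic-type Conley index, which contradicts ellipticity. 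Hence none of the three orbits is locally maximal.
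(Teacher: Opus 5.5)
There is a genuine gap at the step you flag as the main obstacle, and index counting cannot close it. A Beatty-type partition of the sequences $\{\mu(x_j^m)\}$ cannot exclude hyperbolicity, because the index data do not distinguish a hyperbolic orbit from an elliptic one.

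Take the paper's irrational ellipsoid example. There the third orbit has $P_3=R(\theta)\diamond R(2\pi-\theta)$ and $\mu(x_3^m)=4m$ for all $m$. Now consider a positive hyperbolic orbit with Poincar\'e map $\mathrm{diag}(\lambda_1,\lambda_2,\lambda_1^{-1},\lambda_2^{-1})$, $\lambda_i>1$, and index $4$. It also has $\mu(x^m)=m\mu(x)=4m$, mean index $4$, and all iterates good. Put such an orbit in place of $x_3$:
\begin{itemize}
\item the three sequences still partition $\{4,6,8,\dots\}$;
\item the degrees $\equiv2 \pmod 4$ are still filled by the genuine Beatty sequences $4\lceil m(1+t)\rceil-2$ and $4\lceil m(1+1/t)\rceil-2$, $t=\sqrt{2/3}$, coming from $x_1,x_2$;
\item $\sum_j1/\hat\mu(x_j)=1/2$ still holds.
\end{itemize}
So your claim that the partition ``forces rotational contributions in each orbit'' is false. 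No refinement of the resonance/Beatty count can show that the third orbit is elliptic. Note also that a Krein-indefinite quadruple $N_2(e^{\sqrt{-1}\varphi},B)$ is elliptic and is genuinely allowed for that orbit, so it should not be excluded. Your last paragraph has the same problem from the other direction. Nothing you cite shows that an elliptic orbit in dimension $5$ cannot be locally maximal, and the asserted Conley-index dichotomy is not an available result.

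The missing idea is to get non-hyperbolicity from a Floer-theoretic statement rather than from indices. Theorem B of \cite{CGGM26} says that for a non-degenerate, dynamically convex contact form no closed Reeb orbit is locally maximal, whereas hyperbolic orbits are locally maximal. This gives non-local-maximality directly, and hence non-hyperbolicity. Once all iterates are known to have even index, the only non-hyperbolic possibilities in $\Sp(4)$ are $R(\theta_1)\diamond R(\theta_2)$ and $N_2$, i.e.\ ellipticity. (A rotation block plus a real hyperbolic block yields an odd index at the first or second iterate.)

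To apply Theorem B you need dynamical convexity, which your proposal never establishes; your ``Morse equalities'' step presupposes the lacunarity it is meant to prove. The paper gets dynamical convexity in four moves:
\begin{itemize}
\item it shows that no good orbit has index $1$, $0$ or $-1$, and that all mean indices are positive, so every good orbit has index $\ge2$;
\item Theorem 1.2 of \cite{DLLW24} then makes all indices even;
\item so the differential vanishes and index $2$ cannot occur;
\item hence all indices are $\ge4=n+1$.
\end{itemize}

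Irrational ellipticity of two orbits then comes from the common index jump theorem, not a Beatty lemma. The degrees $2N\pm2$ must be realized by some $x_i^{2m_i}$. The equality $\mu(x_i^{2m_i})=2N-C(M_i)+2\Delta_i=2N\mp2$ forces $C(M_i)=2$ with $\Delta_i\in\{0,2\}$, which rules out conjugate angle pairs and $N_2$.
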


To the authors' knowledge, it is the first time to obtain more than two elliptic closed orbits on pseudo rotations of contact spheres under some natural condition satisfied by irrational ellipsoids.

\begin{rem}
When $n\ge3$, irrational ellipticity of a closed characteristic $x_i$ is usually detected by checking if the index of its $2m_i$-th iteration equals to $2N\pm(n-1)$ as in \cite{DQ26,HW22,LLW25,W22}, where $N$ and $m_i$ are chosen by Common Index Jump Theorem. However, in this way, we may never detect more than $2$ irrationally elliptic closed characteristics under certain conditions satisfied by irrational ellipsoids, see Example 4.3 in Section \ref{Proof}.
\end{rem}
\begin{rem}
On star-shaped hypersurfaces in $\R^{2n}$, under non-degeneracy and the index condition as in \cite{DLLW24} non-hyperbolicity can be confirmed, or more generally, the number of the $2\times2$ rotation matrix can be estimated. It follows from \cite{DLLW24} and Theorem A of \cite{CGGM26} immediately that all closed characteristics are non-hyperbolic when there are exactly $n$ simple closed characteristics. Moreover, for star-shaped hypersurfaces in $\R^8$, the index condition can be further weakened to that in \cite{DX25}.
\end{rem}

The main new ingredient of the proof of Theorem \ref{3elliptic} is Theorem B of \cite{CGGM26} which confirms the non-local-maximality of all closed characteristics under dynamical convexity and non-degeneracy. We will exclude other cases of its index conditions than dynamical convexity by some index analysis.

Now we consider Theorem B of \cite{CGGM26} further and remove the non-degeneracy assumption in some sense.

\begin{thm}\label{isoinvar}
Assume that a compact star-shaped hypersurface in $\R^{2n\ge4}$ has only finitely many simple closed characteristics and every closed characteristic with positive mean index has (lower) Conley-Zehnder index no less than $n+1$, then no simple closed characteristic is strongly non-degenerate and locally maximal.
\end{thm}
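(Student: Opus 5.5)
We argue by contradiction. Suppose some simple closed characteristic $x_1$ on $\Sigma$ is strongly non-degenerate and locally maximal, and list all simple closed characteristics as $x_1,\dots,x_q$ with $q<\infty$. The plan is to combine the symplectic homology (or equivariant Floer/$S^1$-equivariant) setup behind Theorem B of \cite{CGGM26} with local Floer homology. The point to keep in view is that Theorem B needs non-degeneracy only to control the local homology of the \emph{other} orbits. Here we replace that by local homology estimates for possibly degenerate isolated orbits, using the index hypothesis. For each iterate $x_i^m$ we use the standard support property: the local equivariant homology $CH_*(x_i^m)$ is supported in degrees $[\hat i(x_i^m)-(n-1),\hat i(x_i^m)+(n-1)]$, and more precisely in $[i(x_i^m), i(x_i^m)+\nu(x_i^m)]$. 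For the strongly non-degenerate $x_1$, every $CH_*(x_1^m)$ is one-dimensional, in degree $i(x_1^m)$ (or zero for bad orbits).

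The first step uses local maximality. Following the mechanism of \cite{CGGM26}, an isolated invariant set together with the finiteness assumption lets us identify a ``local'' contribution of $x_1$ and all its iterates that must be nontrivially connected to the rest of the filtered homology. Concretely, a locally maximal orbit forces the map relating action-filtered homology across the action of $x_1^m$ to be split, so that $x_1$ behaves like a hyperbolic-type orbit in the resonance sense. In particular, its mean index satisfies $\hat i(x_1)>0$ by the index hypothesis. I would show that local maximality forces $x_1$ to be hyperbolic-like in the Common Index Jump sense: either the Poincaré map has no eigenvalues on the unit circle, or, for the iterates selected below, $i(x_1^{2m_1\pm1})$ lands at the extreme ends $2N\pm(n-1)$ \emph{inconsistently} with ellipticity. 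I expect to reuse the argument of Theorem B verbatim at this point.

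The second step is Common Index Jump. We apply the Common Index Jump Theorem to $x_1,\dots,x_q$, all of which have positive mean index (those with zero mean index are excluded by the index $\ge n+1$ hypothesis together with $\hat i\ge i-?$ bounds; if one had zero mean index its iterates would have bounded index, contradicting the resonance identity). This produces $N$ and $m_i$ with
\[
i(x_i^{2m_i-1})+\nu(x_i^{2m_i-1})\le 2N-(i(x_i)-i(x_i^{1}))\ldots,\qquad i(x_i^{2m_i+1})\ge 2N+i(x_i)\ge 2N+n+1 .
\]
The hypothesis $i\ge n+1$ gives exactly the dynamical-convexity-type bounds, and hence $i(x_i^{2m_i})\ge 2N-(n-1)$ and $i(x_i^{2m_i})+\nu\le 2N+n-1$. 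The equivariant homology of $S^{2n-1}$ is $\Q$ in each degree $n+1+2k$. So degrees $2N-n+1,\dots,2N+n-1$ (with the right parity) must be generated only by the $x_i^{2m_i}$, each contributing at most one degree by the standard injection-map argument of Long–Zhu type. This gives a count forcing each $x_i^{2m_i}$ to be ``used'' in one specific degree. For $x_1$, which is strongly non-degenerate, the degree is $i(x_1^{2m_1})$. The local maximality established in the first step then implies $i(x_1^{2m_1})$ takes a value that pairs with a neighboring generator, so it cannot contribute a surviving class. The result is one degree short, which is a contradiction.

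The main obstacle is the first step: converting local maximality into a precise index or homological consequence without non-degeneracy of the other orbits. I would first try to transplant the barcode/persistence argument of \cite{CGGM26}. That argument shows a locally maximal strongly non-degenerate orbit produces, in the filtered equivariant symplectic homology, bars whose endpoints are both at iterates of $x_1$, that is, ``non-essential'' generators. The task is to check that its only use of non-degeneracy elsewhere is the support bound for local homology, which still holds for degenerate isolated orbits by the Ginzburg–Gürel shifting lemma. If this works, the counting in the second step goes through with $\nu(x_i^{2m_i})$ absorbed by the support interval.
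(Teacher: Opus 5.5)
There is a genuine gap: neither of your two steps produces a contradiction. Local maximality does not force $x_1$ to be ``hyperbolic-like'' in any index sense, and the proof of Theorem B of \cite{CGGM26} is not a counting argument. Local maximality gives the crossing energy bound $\sigma$ (Theorem 4.1 there) for cascades between $\tilde{x}_1^{2m_1}$ and other orbits. Together with the action bound $C$ of Theorem 4.3 there, this forces $\hat{x}_1^{2m_1}$ or $\check{x}_1^{2m_1}$ to be connected to an orbit in an adjacent degree whose action differs by an amount in $[\sigma,C]$. The contradiction is that no such partner exists. The iterates $x_i^{2m_i}$ are excluded by action, those with $|j-2m_i|>\bar{m}$ by a large degree gap, and those with $1\le|j-2m_i|\le\bar{m}$ by a degree gap of at least $2$.

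Your second step cannot replace this mechanism, for three reasons.
\begin{itemize}
\item Your claim that degrees in $[2N-n+1,2N+n-1]$ come only from the $x_i^{2m_i}$ is false, since $\mu_+(x_i^{2m_i-1})$ can be as large as $2N-2$.
\item ``At most one degree per $x_i^{2m_i}$'' is unjustified for degenerate orbits.
\item The theorem allows any finite number of simple orbits, so ``one degree short'' could only work if there were exactly $n$ of them.
\end{itemize}
Separately, the hypothesis says nothing about orbits with non-positive mean index, so they are not excluded. They are harmless only because their indices are bounded above.

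The real difficulty sits exactly where you write ``if this works'': the support bound $[\mu_-(x_i^j),\mu_+(x_i^j)+1]$ alone does not give the degree gap. From $\mu_-(x_i)\ge n+1$ you get $\mu_+(x_i^{2m_i-1})+1\le 2N-1$ and $\mu_-(x_i^{2m_i+1})\ge 2N+n+1$. However, $\mu(x_1^{2m_1})=2N-C(M_1)+2\Delta_1$ can be as low as $2N-n+1$. So both $\hat{x}_1^{2m_1}$ and $\check{x}_1^{2m_1}$ may lie inside or next to the support interval of a degenerate $x_i^{2m_i-1}$.

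The missing idea is the symmetric property of the Common Index Jump Theorem \cite{HW22}. There is a second tuple $(N',m_1',\dots)$ with $\Delta_1+\Delta_1'=C(M_1)$, so for one of the two tuples $\mu(x_1^{2m_1})\in[2N,2N+n-1]$. For that tuple, $\check{x}_1^{2m_1}$ has degree distance at least $2$ from every relevant interval. The only exception is when $\mu(x_1^{2m_1})=2N$ and some $\mu_+(x_i^{2m_i-1})=2N-2$. In that case $\hat{x}_1^{2m_1}$, of degree $2N+1\le 2N+n-1$, has the required distance instead. This is why the paper's key lemma is an either/or statement for $\hat{x}_1^{2m_1}$ and $\check{x}_1^{2m_1}$. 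Without this step, the argument of \cite{CGGM26} does not go through when the other orbits are degenerate.
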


Thus notice that Theorem B of \cite{CGGM26} actually forbids the appearance of (strongly non-degenerate and) locally maximal closed Reeb orbit under the additional assumption that $\alpha$ is non-degenerate.

The proofs of Theorem \ref{weak condition non-hyperbolic} and Theorem \ref{isoinvar} are arguments by contradiction, following the strategy of \cite{CGGM26} to look at the action or index distance from the special kind of closed characteristics that we hope to exclude. In the proof of Theorem \ref{weak condition non-hyperbolic}, the distance between the index of the special closed characteristic at some iterations and the support of the local Floer homology of another closed characteristic will be estimated in some case that two kind of difference above can not provide useful information, which is also enough to get a contradiction. In addition, a symmetric argument in the Common Index Jump Theorem will be applied to narrow the range of the index of the special closed characteristic such that its distance to the larger range of indices of other closed characteristics caused by degeneracy can be still effectively estimated.

\setcounter{equation}{0}

\section{Floer homology}
We adopt the framework from \cite{BO09, CGGM26} and references therein for details. Let $\alpha$ be the contact form on the boundary $M=\partial W$ of a Liouville domain $W^{2n\geq 4}$. 
Assume that \be c_1(TW)|_{\pi_2(W)}=0.\lb{c_1=0}\ee Denote by $\widehat{W}$ the symplectic completion of $W$, i.e., $\widehat{W}=W\cup_{M}M\times[1,\infty)$
with the symplectic form $\omega$ extended to $M\times[1,\infty)$ as $d(r\alpha)$, where $r$ is the coordinate on $[1,\infty)$. A classical example is the star-shaped domain in $(\R^{2n}, \omega_{std})$, whose boundary is contactomorphic to the standard contact sphere $M=S^{2n-1}$.

We will concentrate on closed Reeb orbits of $(M,\alpha)$ which are contractible in $W$ and denote the collection of such orbits by $\mathcal{P}(\alpha)$. The collection of periods of all orbits in $\mathcal{P}(\alpha)$ is denoted by $\mathcal{S}(\alpha)$. For each closed Reeb orbit $x\in\mathcal{P}(\alpha)$, let $P_x\in \Sp(2n-2)$ be its linearized Poincar\'{e} map. We denote $\nu(x)$ by the nullity of $P_x$ and, due to (\ref{c_1=0}), $\hat{\mu}(x), \mu_+(x), \mu_-(x)$ by the mean, upper and lower Conley-Zehnder index of the linearized flow along it under the symplectic trivialization of the contact structure induced by the disk capping of $x$, see Section 3 for details. When $x$ is non-degenerate, the upper and lower Conley-Zehnder index coincide. At this time, we call them the Conley-Zehnder index of $x$ and denote $\mu(x)$, omitting the subscript. For Hamiltonian $1$-periodic orbits on $\widehat{W}$, these indices are defined similarly for the linearized Hamiltonian flow along it, which is a path in $\Sp(2n)$.

The Hamiltonian action functional $\mathcal{A}_H: C^\infty(S^1,W)\to \R$ is defined by $$\mathcal{A}_H(x)=\int_{\bar{x}}\omega-\int_{S^1}H(x(t))dt,$$
where $\bar{x}$ is a disk bounded by $x$. Its critical points are exactly $1$-periodic orbits of $H$ and the set of critical values is denoted by $\mathcal{S}(H)$.
\subsection{Floer complex counting cascades}
Below we consider the set $\mathcal{H}$ of Hamiltonians on $\widehat{W}$ depending only on $r$ outside $W$, i.e., $H=h(r)$ on $M\times[1,\infty)$ and satisfying
\begin{itemize}
    \item $H=0$ on $W$ and $h$ is monotone increasing;
    \item $h''>0$ on $(1,r_{\max})$ for some $r_{\max}>1$ depending on $h$;
    \item $h(r)=ar-c$, $a\notin\mathcal{S}(\alpha)$ when $r\geq r_{\max}$.
\end{itemize}
The Hamiltonian vector field $X_H$ is determined by $\omega(X_H,\cdot) = -dH$. For $H\in\mathcal{H}$,
$ X_H = h'(r)R_\alpha$ on $M\times[1,\infty)$, where $R_\alpha$ is the Reeb vector field. Hence $T$-periodic orbits $x$ of the Reeb flow with $T<a$ are in one-to-one correspondence with $1$-periodic orbits $\tilde{x}=(x,r)$ of $H$ with $ h'(r)=T$. Late we call $a$ the {\it slope} of $H$. Note that there is a natural partial order in $\mathcal{H}$ defined by $H_0\preceq H_1\Leftrightarrow H_0(\cdot)\le H_1(\cdot)$ at any point in $\widehat{W}$.

Now we introduce the Floer homology of $H\in\mathcal{H}$. It is the homology of the Floer complex $\mathrm{CF}(H)$ defined here in a somewhat unusual way by applying the standard Morse-Bott complex construction using ``cascades'' to the non-constant $1$-periodic orbits of $H$, see \cite{BO09}. The coefficient of the complex will be taken as $\Q$.

Next, we give more description about $\mathrm{CF}(H)$ that we will use in the proof. We first consider the case where $\alpha$ is non-degenerate. In this case, the $1$-periodic orbit corresponding to $x$, $\tilde{x}$, is Morse-Bott non-degenerate. After fixing a perfect Morse function $f_x$ and a Riemannian metric on $\tilde{x}(S^1)$, we can perturb $H$ to a non-degenerate time-dependent Hamiltonian $H_1$ (cf. (25) in \cite{BO09}) whose $1$-periodic orbits consist of
\begin{itemize}
   \item Constant orbits $p$ in $W$, corresponding to the (non-degenerate) critical point of $H_1$;
   \item Reparameterizations of $\tilde{x},\ \hat{x}\ {\rm and}\ \check{x}$, corresponding to the maximum and the minimum of $f_x$.
\end{itemize}
Then the Floer complex is defined as
\be {\rm CF}(H)=\mathop{\oplus}\limits_{x\in\mathcal{P}(\alpha)}\Q\langle\hat{x},\check{x}\rangle\ \oplus\mathop{\oplus}\limits_{p\in {\rm crit}(H_1)}\Q\langle p\rangle,\ee
graded by the Conley-Zehnder index. Specifically, the degree of the generators are \be\label{generator degree}|\hat{x}| = \mu(x) + 1,\ |\check{x}| = \mu(x),\ |p|=n-\mu_{Morse}(p).\ee  

After choosing a regular almost complex structure $J$, the differential of this chain complex $\partial$ is defined by \be \partial\hat{x}= \sum\limits_{|\hat{x}|-|\hat{y}|=1}n(\hat{x},\hat{y})\langle\hat{y}\rangle
+\sum\limits_{|\hat{x}|-|\check{y}|=1}n(\hat{x},\check{y})\langle\check{y}\rangle
+\sum\limits_{|\hat{x}|-|p|=1}n(\hat{x},p)\langle p\rangle, \ee where $n(\hat{x},\hat{y})$, etc count cascades from $\hat{x}$ to $\hat{y}$ with signs determined by a coherent orientation (see (36) in \cite{BO09}). Roughly speaking, a cascade is the concatenation of integral curves of $-\nabla f_x$ and Floer trajectories with respect to $H$ (c.f. (39)-(40) in \cite{BO09}). $\partial\check{x}$ is defined in a similar fashion. Note that $n(\hat{x},\check{x})=0$ since the Morse function $f_x$ is perfect.


When $\alpha$ is degenerate, the Floer complex of $H$ is defined as above by replacing it with a $C^\infty$-small Morse-Bott perturbation $\tilde{H}$. If the closed Reeb orbits of $\alpha$ with period less than $\operatorname{slope}(H)$ are isolated, then so are the non-constant $1$-periodic orbits of $H$ and they split into all (Morse-Bott non-degenerate and) non-constant closed orbits of the perturbation $\tilde{H}$. We refer the reader to Section 3.3.2 of \cite{CGGM26} for further details.
\subsection{Filtered and local homology}\label{Filtered and local homology}
Actually, the complex $\mathrm{CF}(H)$ is filtered by the Hamiltonian action $\mathcal{A}_H$
 and the homology of the filtered complex $\mathrm{CF}^I(H)$ is still the usual filtered Floer homology $\mathrm{HF}^I(H)$,
 which fits into the long exact sequence, \be\label{long exact sequence}
\cdots \to \mathrm{HF}_m^{(a,b)}(H) \to \mathrm{HF}_m^{(a,c)}(H) \to \mathrm{HF}_m^{(b,c)}(H) \to \mathrm{HF}_{m-1}^{(a,b)}(H) \to \cdots,
\ee where $a,b,c\notin\mathcal{S}(H)$ and $-\infty\le a<b<c\le+\infty$.  For $H_0\preceq H_1$, there is a continuation map,$$\mathrm{HF}^I(H_0)\to\mathrm{HF}^I(H_1),$$where $I$ is disjoint from $\mathcal{S}(H_0)\cup\mathcal{S}(H_1)$, $\mathcal{S}(H)$ is the set consisting of Hamiltonian actions of every $1$-periodic orbit. Then symplectic homology is defined by taking the direct limit, $$\mathrm{SH}^I(W)=\lim_{\mathop{\longrightarrow}\limits_{H\in\mathcal{H}}}\mathrm{HF}^I(H).$$Its equivariant version $\mathrm{SH}^{S^1,I}(W)$ can be defined similarly, starting from equivariant Floer homology, see \cite{GG20}. It is well-known when $W$ is a star-shaped domain that
\be\label{contact sphere homology}\mathrm{SH}_*^{S^1,(0,\infty)}(W)=\left\{\begin{array}{ll} \Q,\quad \mbox{if}\ *=n+1,n+3,\cdots\\
           0, \quad\mbox{otherwise}  \end{array}\right.\ee

For an isolated closed Reeb orbit of $\alpha$ with period $T<\operatorname{slope}(H)$, the $1$-periodic orbit $\tilde{x}$ of $H$ is also isolated as a $1$-periodic orbit of $X_H$. As a consequence, we obtain an isolated circle $\Gamma=\tilde{x}(S^1)$ of fixed points for the time-$1$ map of the Hamiltonian flow. We denote by $\mathrm{HF}(\tilde{x})$ the local Floer homology of $\Gamma$. The homology group of this type was first considered in \cite{F89a,F89b} and we refer the reader to \cite{G10} for a more similar context. It is well known that
\be\label{support range}\mathrm{supp}\mathrm{HF}(\tilde{x}) \subset [\mu_-(x),\mu_+(x)+1], \ee
where the support $\mathrm{supp}\mathrm{HF}(\tilde{x})$ is denoted by $\{k\in\Z|\mathrm{HF}_k(\tilde{x})\neq0\}$. In particular, it equals to $\{\mu(x),\mu(x)+1\}$ when $x$ is non-degenerate. Another well known fact that we will use is \be\label{filter local}\mathrm{HF}^{(c-\epsilon,c+\epsilon)}(H)=\mathop{\oplus}\limits_{\mathcal{A}_H(\tilde{x})=c}\mathrm{HF}(\tilde{x}),\ee
if $c$ is the only critical value in $(c-\epsilon,c+\epsilon)$ and all $1$-periodic orbits with action $c$ are isolated.

Given a closed Reeb orbit $x$, for every admissible Hamiltonian $H$ with the slope greater than the period of $x$, there is a unique $1$-periodic orbit $\tilde{x}$. It is simpler in the local case that the local Floer homology of each such $1$-periodic orbit is the same, see Section 2.3 of \cite{F21}. Therefore, we can just define the local symplectic homology $\mathrm{SH}(x)$ of the closed Reeb orbit $x$ to be $\mathrm{HF}(\tilde{x})$. The equivariant local symplectic homology $\mathrm{SH}^{S^1}(x)$ can be defined in a similar way, starting from the equivariant Floer homology of the Hamiltonian $H\in\mathcal{H}$, see \cite{F21,GG20} for details.

Now we list two facts about $\mathrm{SH}^{S^1,I}(W)$ and $\mathrm{SH}^{S^1}(x)$ with rational coefficients. Given a non-degenerate closed Reeb orbit, we call it {\it good} if the parity of its index is the same as its underlying simple closed orbit and {\it bad} otherwise. For the filtered equivariant symplectic homology, we have the following fact.

\begin{pro}\label{good orbit complex}{\rm (c.f. Proposition 3.3 of \cite{GG20})} Assume the contact form is non-degenerate and $I\subset(0,\infty)$, then $\mathrm{SH}^{S^1,I}(W)$ can be seen as the homology of a complex generated by good closed Reeb orbits with period in $I$, graded by the Conley-Zehnder index.
\end{pro}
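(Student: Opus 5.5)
This is the standard fact from \cite{GG20}, so I would prove it by running the equivariant Morse--Bott spectral sequence and showing it collapses rationally onto the good orbits. Fix $I=(a,b)\subset(0,\infty)$ with $a,b\notin\mathcal{S}(\alpha)$. Choose $H\in\mathcal{H}$ with slope larger than $b$, so that only finitely many closed Reeb orbits have period in $I$, and all of them are non-degenerate. The equivariant Floer complex $\mathrm{CF}^{S^1,I}(H)$ is built in the Borel model, i.e.\ from $S^1$-equivariant data parametrized by $S^{2N+1}\to\mathbb{CP}^N$ with $N\to\infty$. It is filtered by action. The generating circles $\Gamma_x=\tilde{x}(S^1)$ corresponding to the orbits $x$ with period in $I$ are isolated, and the constant orbits in $W$ have action $0\notin I$, so they do not contribute. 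Filtering by action and using (\ref{filter local}) in its equivariant form, the $E^1$-page becomes $\bigoplus_x \mathrm{SH}^{S^1}_*(x)$. Orbits with equal period can be grouped together; this is harmless because the local pieces simply add.

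The key step is computing $\mathrm{SH}^{S^1}(x)$ with $\Q$-coefficients for a non-degenerate orbit $x=y^k$, where $y$ is simple. The local non-equivariant homology is $H_{*-\mu(x)}(S^1;\mathcal{O}_x)$, where $\mathcal{O}_x$ is the local system on $\Gamma_x$. Going once around the circle acts on the orientation line by $(-1)^{\mu(x)-\mu(y)}$, as in the usual good/bad dichotomy of \cite{BO09}. The circle $\Gamma_x$ carries an $S^1$-action with isotropy $\Z_k$, so the equivariant homology is $H_*(\Gamma_x\times_{S^1}ES^1;\mathcal{O}_x)=H_*(B\Z_k;\mathcal{O}_x)$, shifted by $\mu(x)$. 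Over $\Q$ this equals $\mathcal{O}_x$-coinvariants concentrated in degree $0$. If $x$ is good the local system is trivial and we get $\Q$ in degree $\mu(x)$; if $x$ is bad the generator acts by $-1$, the coinvariants vanish, and we get $0$. Hence $E^1\cong\bigoplus_{x\ \mathrm{good}}\Q\langle x\rangle$ graded by $\mu(x)$.

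Next I would replace the spectral sequence by a chain complex. By homological perturbation, the filtered complex $\mathrm{CF}^{S^1,I}(H)$ over the field $\Q$ is filtered chain homotopy equivalent to a complex whose underlying group is $E^1$, with a differential that strictly decreases action. This gives a complex generated by the good orbits with period in $I$ and graded by $\mu$, whose homology is $\mathrm{HF}^{S^1,I}(H)$. Finally, continuation maps for slope larger than $b$ are isomorphisms on the window $I$, since no new orbits enter it. Therefore the direct limit $\mathrm{SH}^{S^1,I}(W)$ is computed by the same complex.

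The main obstacle is the local computation: identifying the orientation local system on $\Gamma_x$ with the parity of $\mu(x)-\mu(y)$. This is a coherent-orientation argument, and I would cite \cite{BO09} and \cite{GG20} for it rather than redo it. The other steps are formal.
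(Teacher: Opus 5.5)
Your argument is correct, and it is the standard argument behind the cited Proposition 3.3 of \cite{GG20}; the paper itself gives no proof beyond that citation. The steps are the action-filtration spectral sequence, the rational computation of the local equivariant homology as $H_*(B\Z_k;\mathcal{O}_x)$ (giving $\Q$ in degree $\mu(x)$ for good orbits and $0$ for bad ones), and a homological-perturbation reduction to a complex with strictly action-decreasing differential. One step remains implicit: the paper also uses the statement with $I=(0,\infty)$. To get that case, pass from bounded windows $(a,b)$ to their union, using that such reduced filtered complexes are unique up to filtered isomorphism and can therefore be chosen nested as $b\to\infty$.
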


The local equivariant symplectic homology has been related with local symplectic homology groups by the following (cf. \cite{CGG25})
\begin{equation}\label{local-splitting}
\mathrm{SH}(x) = \mathrm{SH}^{S^1}(x) \oplus \mathrm{SH}^{S^1}(x)[-1].
\end{equation}

\subsection{Symplectically degenerate maximum }

An isolated closed Reeb orbit $x$ is called a {\it symplectically degenerate maximum} (or SDM) if $\hat{\mu}(x)\in2\Z$ and $\mathrm{SH}^{S^1}_{\hat{\mu}(x)+n-1}(x)\neq0$. Note that this definition is irrelevant to the trivialization of contact structure, although $\hat{\mu}(x)$ differs by an even number for different trivializations. We call an iteration $x^k$ {\it admissible} if $\nu(x^k)=\nu(x)$. In the following proposition, we collect some results from \cite{GHHM13}, with the knowledge that for a simple closed orbit $x$, $\mathrm{SH}^{S^1}_*(x)$ is isomorphic to the local contact homology $HC_*(x)$ with rational coefficients, up to a shift of degree.

\begin{pro}\label{SDM} {\rm(cf. Theorem 2 and Proposition 3 of \cite{GHHM13})}
 For a simple and isolated closed Reeb orbit $x$, it is an SDM if and only if $\mathrm{SH}^{S^1}_{\hat{\mu}(x^{k_i})+n-1}(x^{k_i})\neq0$ for some sequence of admissible iterations $k_i\to\infty$. Moreover, the existence of a simple SDM implies the existence of infinitely many closed Reeb orbits.
\end{pro}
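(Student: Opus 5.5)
The plan is to reduce everything to Gromoll--Meyer type statements about the local homology of iterations, and then to run the Hingston--Ginzburg ``degenerate maximum'' argument in the setting of equivariant symplectic homology.

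\emph{Step 1: iterations of the local homology.} For an isolated simple orbit $x$, I would split the linearized Poincar\'e map $P_x$ into a degenerate part (the generalized eigenspace of the eigenvalue $1$) and a non-degenerate remainder. Via a local generating function on a slice, $\mathrm{SH}^{S^1}(x)$ is then the local Morse homology of a function $f$ on the $\nu(x)$-dimensional kernel directions, shifted by the index of the non-degenerate remainder.

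For an admissible iteration $x^k$ with $\nu(x^k)=\nu(x)$, no new eigenvalue $1$ appears. The generating function of $x^k$ restricted to the kernel is then $k f$, up to a $\Z_k$-equivariant change of coordinates (Shub--Sullivan / Gromoll--Meyer). Consequently
\[\mathrm{SH}^{S^1}_{*}(x^k)\cong \mathrm{SH}^{S^1}_{*-s_k}(x),\qquad s_k=\mu_\pm(x^k)-\mu_\pm(x)\ \text{(computed from the non-degenerate part)}.\]
With rational coefficients the equivariant local homology is the $\Z_k$-invariant part. Admissibility, together with a parity check of the kind used for good and bad orbits (compare Proposition \ref{good orbit complex}), shows that the $\Z_k$-action is trivial in the relevant degree.

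\emph{Step 2: the equivalence.} By (\ref{support range}) and (\ref{local-splitting}), the degree $\hat\mu(x^k)+n-1$ is the top end of the possible support of $\mathrm{SH}^{S^1}(x^k)$. Nonzero homology there occurs exactly when the kernel part is totally degenerate in the maximal sense: all kernel eigen-directions have rotation zero, $f$ has a strict local maximum in homology, and the mean index of the non-degenerate part is an even integer.

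If $x$ is an SDM, Step 1 gives nonvanishing for every admissible $k$. In that case $\hat\mu(x^k)=k\hat\mu(x)\in2\Z$ and $s_k=\hat\mu(x^k)-\hat\mu(x)$, so the top degree is carried to the top degree. Admissible iterations form an infinite set, which gives the forward direction.

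Conversely, nonvanishing at $\hat\mu(x^{k_i})+n-1$ for some $k_i$ forces $f$ to be a local maximum and $\hat\mu(x^{k_i})$ to be even. Pulling this back through the isomorphism of Step 1 puts the class in degree $\hat\mu(x)+n-1$. The remaining point is $\hat\mu(x)\in2\Z$. I would get it from the fact that the top-degree condition forces the non-degenerate part to contribute exactly its mean index. Then $\hat\mu(x)$ differs from $\mu_\pm(x)$ by an integer of the right parity, and evenness follows by comparing along two admissible $k_i$.

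\emph{Step 3: infinitely many orbits.} Argue by contradiction, assuming there are finitely many simple orbits, and let $x$ be a simple SDM of period $T$. The local SDM class of $x^{k}$ has degree $d_k=\hat\mu(x^k)+n-1$ and action $kT$. I would show that for large admissible $k$ this class survives in $\mathrm{SH}^{S^1,(kT-\epsilon,kT+\epsilon)}(W)$ and maps nontrivially to $\mathrm{SH}^{S^1,(0,\infty)}_{d_k}(W)$. This map is controlled by the long exact sequence (\ref{long exact sequence}) and (\ref{filter local}), since with finitely many orbits only finitely many actions and indices are nearby. Then (\ref{contact sphere homology}) supplies a unique generator in degree $d_k$, whose spectral invariant $c_{d_k}$ equals $kT$.

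The key additional input is Hingston's construction. Because $x^k$ is a degenerate maximum, the same local class can be capped by a ``shifted'' Hamiltonian that shifts action by an arbitrarily small amount while raising degree by $2$. This gives $c_{d_k+2}=c_{d_k}$. Such an equality of spectral invariants of distinct generators is impossible for an isolated carrier unless infinitely many orbits occur (a Lusternik--Schnirelmann-type argument), which is the contradiction.

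The main obstacle is this last local estimate: showing that an SDM makes the action gap between consecutive degrees arbitrarily small. I would first try to transplant Ginzburg's proof of the Conley conjecture, namely the construction of a $C^2$-small, nearly flat local Hamiltonian near $x^k$ using the local maximum of $kf$, into the equivariant contact setting.
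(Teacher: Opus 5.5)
\textbf{The gap: Step~3.} The paper does not prove this statement. It imports Theorem~2 and Proposition~3 of \cite{GHHM13}, using only that $\mathrm{SH}^{S^1}_*(x)$ agrees with local contact homology over $\Q$ up to a degree shift. Your Steps~1--2 follow the persistence-of-local-homology mechanism behind those results. Step~3, however, is the whole content of the main theorem of \cite{GHHM13}, and it has a genuine gap.

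You leave the decisive local estimate, the Reeb analogue of Ginzburg's key lemma \cite{G10}, unproved. The way you plan to use it is also not supported:
\begin{itemize}
\item That construction is asymptotic in $k$. For each fixed $\epsilon>0$ and all sufficiently large admissible $k$, it yields $\mathrm{SH}^{S^1,(kT,kT+\epsilon)}_D(W)\neq 0$ in some degree $D$ just above $d_k=\hat\mu(x^k)+n-1$ (your $d_k+2$).
\item It gives no arbitrarily small action shift at a fixed $k$. Under your finiteness assumption such a shift would already be absurd by discreteness of the action spectrum, with no Lusternik--Schnirelmann input needed.
\item It says nothing about $c_{d_k+2}$.
\item Your other premise is unjustified: that the local class of $x^k$ maps nontrivially to $\mathrm{SH}^{S^1,(0,\infty)}_{d_k}(W)$, so that $c_{d_k}=kT$. (\ref{long exact sequence}) and (\ref{filter local}) only place the local group inside an action window. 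Nothing stops it from cancelling against orbits $y^j$ of other actions in adjacent degrees.
\end{itemize}

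Neither premise is needed once the key lemma is available. A class as above must come from some $y^j$ with $jT_y\in(kT,kT+\epsilon)$ and $D\in[\mu_-(y^j),\mu_+(y^j)]\subset[j\hat\mu(y)-n+1,\,j\hat\mu(y)+n-1]$. Hence $j\hat\mu(y)-k\hat\mu(x)$ is at least $1$ and bounded above independently of $k$. If $\hat\mu(y)/T_y\neq\hat\mu(x)/T$, this difference tends to infinity with $k$. If the ratios agree, it is at most $|\hat\mu(x)|\epsilon/T<1$ for small $\epsilon$. Either way finiteness is contradicted for large $k$.

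\textbf{Step~2 needs correcting.} Nonvanishing in degree $\hat\mu(x)+n-1$ leaves no non-degenerate remainder at all:
\begin{itemize}
\item By (\ref{local-splitting}) and (\ref{support range}), it forces $\mu_+(x)\ge\hat\mu(x)+n-1$.
\item From (\ref{iteration formula}) one gets $\mu_+(x)-\hat\mu(x)=p_0+p_++\sum_j(1-\theta_j/\pi)$. This is $<n-1$ unless $P_x$ is unipotent with $p_-=0$.
\end{itemize}

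For unipotent $P_x$, evenness of $\hat\mu(x)$ is automatic: close the path up inside unipotent matrices, where the rotation function equals $1$. Comparing two $k_i$ cannot supply it, since if all $k_i$ are odd there is nothing to compare. Unipotence also makes every iterate admissible and good. So $s_k=(k-1)\hat\mu(x)$ and the top class persists in $\mathrm{SH}^{S^1}(x^k)$; this is the parity check you need.

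Conversely, top-degree nonvanishing for an admissible $x^{k_i}$ makes $P_x^{k_i}$ unipotent. Then $P_x$ is unipotent, since an eigenvalue $\lambda\neq 1$ with $\lambda^{k_i}=1$ would raise the nullity. The class then pulls back to $x$ by Step~1.
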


\setcounter{figure}{0}
\setcounter{equation}{0}
\section{Maslov-type index for symplectic paths}
In this section, we will introduce some basic facts about the iterated Conley-Zehnder index, also called the Maslov-type index (see \cite{L02}). Actually, Maslov-type index has given a normalization of the Conley-Zehnder index where the index of a small and non-degenerate quadratic Hamiltonian $Q$ on $\R^{2d}$ equals to $\frac{1}{2}\mathrm{sgn}Q$.

The $\diamond$-sum (direct sum) of any two real matrices is defined by
$$ \left(\begin{array}{lll}A_1 & B_1\\ C_1 & D_1 \end{array}\right)_{2i\times 2i}\diamond
      \left(\begin{array}{lll}A_2 & B_2\\ C_2 & D_2 \end{array}\right)_{2j\times 2j}
=\left(\begin{array}{llll}A_1 & 0 & B_1 & 0 \\
                                   0 & A_2 & 0& B_2\\
                                   C_1 & 0 & D_1 & 0 \\
                                   0 & C_2 & 0 & D_2\end{array}\right). $$
For every $M\in\Sp(2d)$, the homotopy set $\Omega(M)$ of $M$ in $\Sp(2d)$ is defined by
$$ \Om(M)=\{N\in\Sp(2d)\,|\ \nu_{\om}(N)=\nu_{\om}(M),\, \forall\ |\om|=1\}, $$
where $\nu_{\om}(M)\equiv\dim_{\C}\ker_{\C}(M-\om I)$.
The component $\Om^0(M)$ of $M$ in $\Sp(2d)$ is defined by the path connected component of $\Om(M)$ containing $M$.

\medskip

For every continuous symplectic path $\Phi\in\mathcal{P}_\tau(2d)\equiv\{\ga\in C([0,\tau],{\rm Sp}(2d))\ |\ \ga(0)=I_{2d}\}$, we extend
$\Phi(t)$ to $t\in [0,m\tau]$ for every $m\in\N$ by
\bea \Phi^m(t)=\Phi(t-j\tau)\Phi(\tau)^j \quad \forall\;j\tau\le t\le (j+1)\tau,\ j=0, 1, \ldots, m-1. \lb{3.9}\eea
When $\Phi$ is non-degenerate, i.e., $1$ is not the eigenvalue of $\Phi(\tau)$, denote $\mu(\Phi)$ by its Maslov-type index. In the general case, the upper and lower Maslov-type index of $\Phi^m$ are considered as follow \be\label{upper index}\mu_+(\Phi^m)=\sup\{\mu(\Psi)|\Psi\in\mathcal{P}_\tau^*(2d)\ \mbox{is}\ \mbox{sufficiently}\ C^0\ \mbox{close}\ \mbox{to}\ \Phi^m\},\ee
\be\label{lower index}\mu_-(\Phi^m)=\inf\{\mu(\Psi)|\Psi\in\mathcal{P}_\tau^*(2d)\ \mbox{is}\ \mbox{sufficiently}\ C^0\ \mbox{close}\ \mbox{to}\ \Phi^m\},\ee
where $\mathcal{P}_\tau^*(2d)$ is the set of non-degenerate symplectic paths.
Their difference is bounded, \be\mu_+(\Phi)-\mu_-(\Phi)=\nu_1(\Phi(\tau)),\lb{pm_difference}\ee where $\nu_1(\Phi(\tau))$ is called the nullity of $\Phi$ and we omit its subscript later, see Theorem 6.1.8 in \cite{L02}.
The mean index is defined by $\hat{\mu}(\Phi)=\lim_{m\to\infty}\frac{\mu_-(\Phi^m)}{m}$. 

\medskip
\begin{thm}\label{homotopic normal form}{\rm(cf. Theorem 1.8.10 and Lemma 2.3.5 of \cite{L02})} For every $P\in\Sp(2d)$, there exists a continuous path $f\in\Om^0(P)$ connecting $P$ and the so-called basic normal form decomposition as follows
\bea f(1)
&=& N_1(1,1)^{\dm p_-}\,\dm\,I_{2p_0}\,\dm\,N_1(1,-1)^{\dm p_+}\nn\\
  &&\dm\,N_1(-1,1)^{\dm q_-}\,\dm\,(-I_{2q_0})\,\dm\,N_1(-1,-1)^{\dm q_+} \nn\\
&&\dm\,N_2(e^{\sqrt{-1}\aa_{1}},A_{1})\,\dm\,\cdots\,\dm\,N_2(e^{\sqrt{-1}\aa_{r_{\ast}}},A_{r_{\ast}})
  \dm\,N_2(e^{\sqrt{-1}\bb_{1}},B_{1})\,\dm\,\cdots\,\nn\\
&&\dm\,N_2(e^{\sqrt{-1}\bb_{r_{0}}},B_{r_{0}})\dm\,R(\th_1)\,\dm\,\cdots\,\dm\,R(\th_r)\dm\,H(2)^{\dm h},\lb{basic normal}\eea
where $\frac{\alpha_{j}}{2\pi},\frac{\beta_{j}}{2\pi},\frac{\th_{j}}{2\pi}\in(0,1/2)\cup(1/2,1)$, $ 2\times 2$ matrix $N_1(\pm1, b),R(\th)$ and $4\times4$ matrix $N_2(e^{\sqrt{-1}\varphi},B)$ are basic
normal forms introduced in \cite{L00}, with eigenvalue $\pm1, e^{\pm\sqrt{-1}\th}, e^{\pm\sqrt{-1}\varphi}$, respectively. In particular, $R(\th)$ is the rotation matrix $\left(\begin{array}{ll}\cos\th & -\sin\th \\
                           \sin\th & \cos\th \end{array}\right).$ Counting the dimension, we have
\be p_- + p_0 + p_+ \le d.\lb{sum_n-1} \ee
\end{thm}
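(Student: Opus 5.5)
The plan is to reduce to a block decomposition of $P$ and then deform each block separately inside its own homotopy set. Since the path $f$ is required to stay in $\Om^0(P)$, every deformation must preserve $\nu_\om$ for all $|\om|=1$. The reduction uses the fact that $\Sp(2d)$ acts on itself by conjugation, $\Sp(2d)$ is connected, and conjugation preserves every $\nu_\om$. So we may first replace $P$ by any symplectically similar matrix, joining the two by the path $t\mapsto g(t)^{-1}Pg(t)$, where $g(t)$ is a path in $\Sp(2d)$ from $I$ to the conjugating matrix.

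\textbf{Step 1: symplectic splitting by eigenvalues.} For $\lambda\in\sigma(P)$, let $E_\lambda$ be the generalized eigenspace. The standard identity $\om(E_\lambda,E_{\lambda'})=0$ unless $\lambda\lambda'=1$ shows that the real subspaces
$$E_{1},\quad E_{-1},\quad E_{\{e^{\pm\sqrt{-1}\th}\}},\quad E_{\{\lambda,\bar\lambda,\lambda^{-1},\bar\lambda^{-1}\}}\ (|\lambda|\neq1)$$
are symplectic and mutually $\om$-orthogonal. Choosing symplectic bases of these subspaces exhibits $P$ as a $\dm$-sum of blocks, each having a single eigenvalue class. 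This is the content of Lemma 2.3.5 of \cite{L02}.

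\textbf{Step 2: normal forms on the unit circle.} For the eigenvalues $\pm1$, I would invoke the classification of the Jordan structure together with the Krein/sign data of $P|_{E_{\pm1}}$. This gives a symplectic similarity to a $\dm$-sum of higher Jordan-type blocks and $N_1(\pm1,b)$, $b\in\{-1,0,1\}$, where $N_1(\pm1,0)=\pm I_2$. Long's key point (Theorem 1.8.10 of \cite{L02}) is that each larger Jordan block with eigenvalue $\pm1$ can be deformed inside its $\Om^0$ to a $\dm$-sum of $N_1$'s. The deformation only adds off-diagonal perturbations which keep the eigenvalue $\pm1$ but lower the Jordan size, while preserving $\dim\ker(P\mp I)$. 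The resulting numbers satisfy $p_0=$ the number of $I_2$ blocks, and $p_-+p_0+p_+=\nu_1(P)$.

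For $\om=e^{\sqrt{-1}\th}\neq\pm1$, one similarly obtains $R(\th)$ for semisimple Krein-definite pieces and $N_2(\om,B)$ for the non-trivial $4\times4$ Jordan pieces. Again, larger blocks are deformed within $\Om^0$ to these forms. In the statement the angles are $\aa_j,\bb_j,\th_j$; one may also replace $\th$ by $2\pi-\th$ according to the Krein sign, which is why the angles range over $(0,1/2)\cup(1/2,1)$.

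\textbf{Step 3: off the unit circle.} Here $\Om$ imposes no constraint beyond keeping all eigenvalues off the unit circle. A complex quadruple $\{\lambda,\bar\lambda,\lambda^{-1},\bar\lambda^{-1}\}$ can be rotated to a real pair with multiplicity two without touching $S^1$. Positive real eigenvalues $\lambda>1$ are then moved to $2$, giving $H(2)$. Negative real eigenvalues can be paired as $H(-2)^{\dm2}$ and rotated to $H(2)^{\dm2}$ through complex quadruples, leaving at most one $H(-2)$ factor. That factor is absorbed into the notation $H(2)^{\dm h}$, or into $-I$-type sign conventions as in \cite{L02}. Jordan blocks here are diagonalized by an arbitrarily small deformation that keeps the eigenvalues off $S^1$.

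\textbf{Dimension count and main obstacle.} Finally, (\ref{sum_n-1}) holds because the $N_1(1,\cdot)$ and $I_{2p_0}$ blocks each occupy real dimension $2$ in $\R^{2d}$, so $2(p_-+p_0+p_+)\le 2d$. The main obstacle is Step 2: showing that the Jordan-size-reducing deformations of the $\pm1$ and $e^{\sqrt{-1}\th}$ blocks preserve every $\nu_\om$ along the whole path. I would first try explicit one-parameter families in Long's normal coordinates, verifying the nullity at each time directly from the rank of $M(t)-\om I$.
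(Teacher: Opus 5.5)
The paper gives no argument of its own here; it only quotes Theorem 1.8.10 and Lemma 2.3.5 of \cite{L02}. Your overall architecture matches that source: conjugate along a path in $\Sp(2d)$, split into $\om$-orthogonal generalized eigenspaces, normalize each block inside $\Om^0$, then treat the hyperbolic part. However, Step 2 as you describe it cannot work. Inside $\Om(P)$ only the geometric multiplicities $\nu_\om$ are frozen, and every $N\in\Om(P)$ satisfies $\sigma(N)\cap S^1=\sigma(P)\cap S^1$. Take a unipotent $P\in\Sp(4)$ whose only Jordan block has size $4$ (such $P$ exist), so $\nu_1(P)=1$. Any $\dm$-sum of two blocks $N_1(1,b)$ has $\nu_1\ge 2$. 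Hence no deformation can keep all four eigenvalues at $1$, shrink the Jordan blocks, and preserve $\dim\ker(P-I)$, as you propose. Two eigenvalues must instead be ejected from $1$ as a real pair $\{\lambda,\lambda^{-1}\}$ off the circle, and the only possible target is $N_1(1,\pm1)\dm H(2)$. The missing idea is this: the surplus algebraic multiplicity at each unit-circle eigenvalue is pushed off $S^1$ into hyperbolic eigenvalues (real pairs at $\pm1$, complex quadruples at $e^{\sqrt{-1}\th}$) while every $\nu_\om$ is held fixed. These new hyperbolic pieces are then merged into Step 3. This is the real content of the quoted Theorem 1.8.10, and your "main obstacle" paragraph aims at a deformation that does not exist. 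It is also why (\ref{sum_n-1}) is only an inequality, and why $h$ is not determined by the original hyperbolic subspace of $P$. Relatedly, your count is wrong: since $\nu_1(I_2)=2$ and $\nu_1(N_1(1,\pm1))=1$, one has $\nu_1(P)=p_-+2p_0+p_+$. This is exactly the identity used in (\ref{app_2m_i-m}).

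Second, the leftover $H(-2)$ in Step 3 cannot be ``absorbed into the notation''. For $P=H(-2)\in\Sp(2)$, $\Om(P)$ is the set of hyperbolic elements of $\Sp(2)$. Its component through $P$ is $\{\mathrm{tr}<-2\}$, which does not contain $H(2)$. In general, a lone negative real pair cannot change sign while it stays real. Inside $\Om(P)$ it can become non-real only together with a second pair, because otherwise new eigenvalues would appear on $S^1$. So it survives, for instance in $N_1(-1,1)\dm H(-2)$, where merging at $-1$ just returns it to the negative axis. What can actually be proved is a hyperbolic tail of the form $H(2)^{\dm h}$ or $H(2)^{\dm(h-1)}\dm H(-2)$; the displayed statement is loose on this point. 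This is harmless for Theorem \ref{precise_iteration}: the index of a path ending at a hyperbolic block is multiplied by $m$ under iteration, so it is absorbed into $m\mu_-(\Phi)$. You should still state the result that way rather than hide the factor.
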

\begin{thm}\label{precise_iteration}{\rm(cf. Theorem 8.3.1 of \cite{L02})}
For a symplectic path $\Phi$ ending at $P$ with the decomposition (\ref{basic normal}), we have
\bea \mu_-(\Phi^m)
&=& m(\mu_-(\Phi)+p_-+p_0-r ) + 2\sum_{j=1}^r\left\lceil\frac{m\th_j}{2\pi}\right\rceil - r   \nn\\
&&  - p_- - p_0 - {{1+(-1)^m}\over 2}(q_0+q_+) \nn
              + 2\sum_{j=1}^{r_{\ast}}\vf\left(\frac{m\aa_j}{2\pi}\right) - 2r_{\ast},\lb{iteration formula}\\
\eea
where $\lceil x\rceil=\min\{y\in\Z|y\ge x\}$, $\vf(a)=0$ if $a\in\Z$ and $\vf(a)=1$ if $a\notin\Z$.
\end{thm}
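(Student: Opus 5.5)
The statement immediately above is Theorem \ref{precise_iteration}, the precise iteration formula for $\mu_-(\Phi^m)$. The plan is to reduce, via homotopy invariance and symplectic additivity, to a computation for each basic normal form block.

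\textbf{Step 1: reduce to the normal form.} The lower index $\mu_-(\Phi^m)$ depends only on the endpoint behaviour within $\Omega^0(P)$, together with the homotopy class of the path. Fixing endpoints in $\Omega^0$ preserves every $\nu_\omega$. Hence, by Theorem \ref{homotopic normal form}, I will homotope $\Phi$ (with its endpoint moving inside $\Omega^0(P)$) to a path ending at the basic normal form $f(1)$. This homotopy changes neither $\mu_-(\Phi^m)$ nor $\mu_\pm$ of any iterate, since $\nu_\omega(P^m)$ is controlled by the $\nu_{\omega^{1/m}}(P)$.

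I will further write $\Phi\simeq \xi\diamond\Psi$. Here $\xi$ is a loop-like path in $\Sp(2)$ whose rotation accounts for the integer part of the index, and $\Psi$ is the direct sum of short paths $\Psi_j$, one ending at each block. Using the $\diamond$-additivity of $\mu_-$ (valid blockwise because the nullity is additive), the formula splits into contributions from each block plus a linear term coming from the rotation part.

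\textbf{Step 2: compute each block.} For each block I will use the Bott-type splitting $\mu_-(\Phi^m)=\sum_{\omega^m=1}\mu_\omega(\Phi)$ with $\omega$-indices, together with the known splitting numbers $S^\pm_M(\omega)$. The block contributions are as follows.

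\begin{itemize}
\item An $N_1(1,1)$ or $I_2$ block contributes only to $\omega=1$. This produces the terms $m(p_-+p_0)-p_--p_0$.
\item $N_1(1,-1)$ contributes nothing.
\item A $-1$ eigenvalue block of type $-I_2$ or $N_1(-1,-1)$ contributes a jump exactly when $\omega=-1$ is an $m$-th root of unity. This happens iff $m$ is even, which produces $-\frac{1+(-1)^m}{2}(q_0+q_+)$.
\item A rotation $R(\theta_j)$ contributes $2\lceil m\theta_j/2\pi\rceil-1-m$ relative to its baseline. This comes from counting the $m$-th roots of unity on the arc between $1$ and $e^{\sqrt{-1}\theta_j}$. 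Summing gives $-mr+2\sum\lceil\cdot\rceil-r$.
\item An $N_2(e^{\sqrt{-1}\alpha_j},A_j)$ block of the "trivial" type $A_j$ contributes $2\varphi(m\alpha_j/2\pi)-2$. Its splitting numbers are nonzero only at the eigenvalue itself, so a correction appears only when $e^{\sqrt{-1}m\alpha_j}=1$.
\item $N_2(\cdot,B_j)$ blocks and hyperbolic $H(2)$ blocks contribute only linearly. These linear parts are absorbed into $m\mu_-(\Phi)$.
\end{itemize}

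\textbf{Step 3: fix the linear term.} To identify the coefficient of $m$, I will evaluate at $m=1$. There the ceiling terms give $r$, $\varphi$ gives $r_*$ (the angles are non-integral), and the remaining constants cancel. This shows the coefficient is exactly $\mu_-(\Phi)+p_-+p_0-r$.

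\textbf{Main obstacle.} The hard part is the careful bookkeeping of the splitting numbers $S^\pm$ for each normal form, especially for $N_1(1,\pm1)$ and the two types of $N_2$. The step to get right is that "lower" index corresponds to taking $S^+$ at $\omega=1$. I would first verify the rotation case by an explicit path $e^{t\theta J}$, then check $N_1$ and $N_2$ against the tables in \cite{L02}.
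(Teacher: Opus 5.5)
Your proposal is correct. The paper gives no argument for this statement; it quotes Theorem 8.3.1 of \cite{L02}, so what you wrote is a reconstruction, not a variant of a proof in the paper. Your route is a standard, complete derivation, granting two results from \cite{L02} (Chapter 9) that the paper does not state:
\begin{itemize}
\item the Bott-type formula $\mu_-(\Phi^m)=\sum_{\omega^m=1}\mu_\omega(\Phi)$;
\item the relation $\mu_\omega(\Phi)=\mu_-(\Phi)+S^+_P(1)+\sum_{\omega_0}(S^+_P(\omega_0)-S^-_P(\omega_0))-S^-_P(\omega)$ for $\omega\neq 1$, where $\omega_0$ runs over eigenvalues strictly between $1$ and $\omega$ counterclockwise.
\end{itemize}
Your block contributions check out. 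For example, an $R(\theta)$ block lowers $\mu_\omega$ by $1$ on the closed arc from $e^{\sqrt{-1}\theta}$ to $e^{-\sqrt{-1}\theta}$ through $-1$ when $\theta<\pi$. When $\theta>\pi$ it raises $\mu_\omega$ by $1$ on the corresponding open arc. Summed over $m$-th roots of unity, both cases give $2\lceil m\theta/2\pi\rceil-m-1$.

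The Bott route makes the path-level decomposition in Step 1 unnecessary. The difference $\mu_\omega(\Phi)-\mu_-(\Phi)$ depends only on the splitting numbers of $P$, which are $\Omega^0$-invariant and $\diamond$-additive (Proposition~\ref{splitting number}(i),(ii)). So you can sum directly over the normal form of $P$. The coefficient of $m$ then comes out automatically as $\mu_-(\Phi)+S^+_P(1)-r=\mu_-(\Phi)+p_-+p_0-r$, and Step 3 is only a consistency check. As written, $\xi\diamond\Psi$ is also dimensionally off: the loop has to be concatenated into one block, not $\diamond$-added.

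One bookkeeping point needs fixing. The blocks producing $2\varphi(m\alpha_j/2\pi)-2$ are those with $(S^+,S^-)=(1,1)$ at $e^{\pm\sqrt{-1}\alpha_j}$. The factor $2$ arises because both conjugates become $m$-th roots of unity at the same time. In Long's terminology these are the \emph{non-trivial} $N_2$ forms, not the trivial ones. The trivial forms $N_2(e^{\sqrt{-1}\beta_j},B_j)$ have splitting numbers $(0,0)$ and contribute only linearly. So do not read Proposition~\ref{splitting number}(vi), as printed, as a statement about the $B_j$ blocks; doing so would produce a spurious $\varphi$-term. Checking against Long's table, as you plan, settles this.
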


It is well known that \be |\mu_-(\Phi)-\hat{\mu}(\Phi)|\le d, \lb{mu-hat{mu}}\ee
where the inequality is strict when $P$ has an eigenvalue other than 1. This can be easily shown by Theorem \ref{precise_iteration}, for example.

\begin{pro}\label{large index growth}{\rm(cf. Theorem 2.2 of \cite{LZ02})} There holds
\be\mu_-(\Phi)-d\le\mu_-(\Phi)-\frac{e(P)}{2}\le\mu_-(\Phi^{m+1})-\mu_+(\Phi^m),\ee where $e(P)$ denotes the total algebraic multiplicity of eigenvalues of $P$ on the unit circle.
\end{pro}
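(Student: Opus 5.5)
The first inequality is immediate: $e(P)$ is the total algebraic multiplicity of the eigenvalues of $P\in\Sp(2d)$ on the unit circle, so $e(P)\le 2d$ and hence $-d\le -\frac{e(P)}{2}$. All the work is in the second inequality,
\[
\mu_-(\Phi^{m+1})-\mu_+(\Phi^m)\ \ge\ \mu_-(\Phi)-\tfrac{e(P)}{2}.
\]
I will prove it by direct computation with the precise iteration formula, Theorem \ref{precise_iteration}.

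\textbf{Reduction to normal forms.} By Theorem \ref{homotopic normal form}, $P$ can be joined inside $\Om^0(P)$ to the basic normal form (\ref{basic normal}). Moving the endpoint inside $\Om^0(P)$ keeps all $\nu_\om$ fixed for every iterate. So $\mu_\pm(\Phi^m)$, $\nu(\Phi^m)$ and $e(P)$ are unchanged, and I may assume $P$ is already in the form (\ref{basic normal}). By (\ref{pm_difference}),
\[
\mu_+(\Phi^m)=\mu_-(\Phi^m)+\nu_1(P^m).
\]
Hence it suffices to show
\[
\mu_-(\Phi^{m+1})-\mu_-(\Phi^m)-\nu_1(P^m)\ \ge\ \mu_-(\Phi)-\tfrac{e(P)}{2}.
\]

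\textbf{The difference of consecutive lower indices.} Subtracting the formula (\ref{iteration formula}) at $m$ from that at $m+1$ gives $\mu_-(\Phi)+p_-+p_0-r$ plus three correction terms:
\begin{itemize}
\item $2\sum_j\big(\lceil (m+1)\th_j/2\pi\rceil-\lceil m\th_j/2\pi\rceil\big)$, which is nonnegative;
\item a parity term $\pm(q_0+q_+)$, which is at least $-(q_0+q_+)$;
\item $2\sum_j\big(\vf((m+1)\aa_j/2\pi)-\vf(m\aa_j/2\pi)\big)$, which is at least $-2r_*$.
\end{itemize}

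\textbf{Bounding the nullity of $P^m$ block by block.} I compute $\nu_1(P^m)$ from the decomposition:
\begin{itemize}
\item $N_1(1,\pm1)$ gives $1$ and $I_{2p_0}$ gives $2p_0$;
\item when $m$ is even, $N_1(-1,\pm1)$ gives $1$ and $-I_{2q_0}$ gives $2q_0$;
\item $R(\th_j)$ gives $2$ exactly when $m\th_j\in 2\pi\Z$;
\item $N_2(e^{\sqrt{-1}\vf},\cdot)$ gives at most $2$ when $m\vf\in2\pi\Z$;
\item $H(2)$ gives $0$.
\end{itemize}
On the other side,
\[
\tfrac{e(P)}{2}=p_-+p_0+p_++q_-+q_0+q_++r+2r_*+2r_0 .
\]

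I then verify the inequality summand by summand:
\begin{itemize}
\item \emph{Eigenvalue $1$ blocks.} They contribute $p_-+p_0$ to the difference and at most $p_-+2p_0+p_+$ to $\nu_1(P^m)$. This is balanced by $p_-+p_0+p_+$ in $\frac{e(P)}{2}$.
\item \emph{Eigenvalue $-1$ blocks.} They cost at most $q_-+2q_0+q_+$ in total (nullity plus parity term), against $q_-+q_0+q_+$ in $\frac{e(P)}{2}$. This is too much when $m$ is even and $q_0>0$. I must then use the exact parity term: it depends on $m+1$ (odd) versus $m$ (even), and its sign should compensate.
\item \emph{Rotations $R(\th_j)$.} Each costs $-1$ from the $-r$ term, is covered by $+1$ in $\frac{e(P)}{2}$, and contributes $2\big(\lceil(m+1)\th_j/2\pi\rceil-\lceil m\th_j/2\pi\rceil\big)$. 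When $m\th_j\in2\pi\Z$ this ceiling jump equals $1$, since $\th_j\notin 2\pi\Z$, and that jump of $2$ pays for the nullity $2$.
\item \emph{$N_2$ blocks.} These are handled the same way, using the $2$ available in $\frac{e(P)}{2}$ per block.
\end{itemize}

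\textbf{Main obstacle.} The hardest part is this careful block-by-block bookkeeping, particularly for the $-1$ blocks and for the $N_2$ blocks with $m\aa_j\in2\pi\Z$. For the $N_2$ blocks I would first try to check the exact nullity of $N_2(e^{\sqrt{-1}\aa},A)^m$ from the definitions in \cite{L00}. If the formula-based bookkeeping fails there, my fallback is a conceptual route: write $\Phi^{m+1}$ as the concatenation of $\Phi^m$ with the path $\Phi(t)P^m$, and use the Bott-type/concatenation estimate for indices of concatenated paths. The correction term in that estimate is bounded by half the multiplicity of unit-circle eigenvalues, namely $\frac{e(P)}{2}$.
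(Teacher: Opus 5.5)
\textbf{Verdict.} Your route, direct bookkeeping with Theorem \ref{precise_iteration}, is sound. (The paper gives no proof and only cites \cite{LZ02}.) The cases you left open do close, so the concatenation fallback is not needed. However, one step in your reduction is false as stated and has to be replaced by an inequality; this is explained in the second paragraph.

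\textbf{The bookkeeping closes.} Write $\tfrac{e_0}{2}=p_-+p_0+p_++q_-+q_0+q_++r+2r_*+2r_0$ for the normal form in (\ref{basic normal}). When you subtract (\ref{iteration formula}) at $m$ from (\ref{iteration formula}) at $m+1$, the parity term is exactly $(-1)^m(q_0+q_+)$. The block-by-block count then gives
\begin{align*}
&\mu_-(\Phi^{m+1})-\mu_+(\Phi^m)-\mu_-(\Phi)+\tfrac{e_0}{2}
=p_-+\epsilon_m+2\sum_{m\theta_j\notin 2\pi\Z}\left(\left\lceil\tfrac{(m+1)\theta_j}{2\pi}\right\rceil-\left\lceil\tfrac{m\theta_j}{2\pi}\right\rceil\right)\\
&\qquad+2\sum_{j=1}^{r_*}\varphi\left(\tfrac{(m+1)\alpha_j}{2\pi}\right)+2\sum_{j=1}^{r_0}\varphi\left(\tfrac{m\beta_j}{2\pi}\right)\ \ge\ 0,
\end{align*}
where $\epsilon_m=q_+$ if $m$ is even and $\epsilon_m=q_-$ if $m$ is odd. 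So for even $m$ the parity term $+(q_0+q_+)$ is exactly what pays for the extra nullity $q_-+2q_0+q_+$ of the $-1$ blocks. For the $4\times4$ blocks you must separate two kinds:
\begin{itemize}
\item the $r_*$ blocks $N_2(e^{\sqrt{-1}\alpha_j},A_j)$ enter (\ref{iteration formula}); their jump $2\varphi(\tfrac{(m+1)\alpha_j}{2\pi})-2\varphi(\tfrac{m\alpha_j}{2\pi})$ equals $+2$ precisely when $m\alpha_j\in2\pi\Z$;
\item the $r_0$ blocks $N_2(e^{\sqrt{-1}\beta_j},B_j)$ do not enter it at all, and are paid for entirely by their share $2$ of $\tfrac{e_0}{2}$.
\end{itemize}

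\textbf{The false step.} You claim that $e(P)$ is unchanged along $\Omega^0(P)$. This is not true, because $\Omega(P)$ fixes only the geometric multiplicities $\nu_\omega$, not the algebraic ones. For example, take the time-one map $P$ of the quadratic Hamiltonian $p_1q_2+\tfrac12 p_2^2$ on $\R^4$. It is unipotent with a single $4\times4$ Jordan block, so $\nu_1(P)=1$ and $e(P)=4$. Its normal form, however, must consist of one block $N_1(1,\pm1)$ and a hyperbolic $2\times2$ block, so $e_0=2$.

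\textbf{The repair.} Your computation proves the inequality with $e_0$ in place of $e(P)$, so you still need $e_0\le e(P)$. This is easy to supply:
\begin{itemize}
\item Let $V$ be the sum of the generalized eigenspaces of $P$ for eigenvalues on the unit circle. Then $\dim V=e(P)$, $V$ is symplectic, and its symplectic complement is the hyperbolic part.
\item $P$ is conjugate, through a path of symplectic conjugations and hence inside $\Omega^0(P)$, to $P|_V\diamond P|_{V^{\perp}}$.
\item Apply Theorem \ref{homotopic normal form} separately to $P|_V\in\Sp(V)$ and to $P|_{V^\perp}$. The normal form of $P|_{V^\perp}$ is purely hyperbolic.
\item This produces a normal form in $\Omega^0(P)$ whose unit-circle blocks occupy at most $\dim V=e(P)$ dimensions, so $e_0\le e(P)$.
\end{itemize}
With this correction your argument proves the proposition, and even the slightly sharper bound with $e_0$.
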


\medskip

Let $S^{\pm}: \Sp(2d)\times S^1\rightarrow \N\cup\{0\}$ be the splitting number as in \cite{L02}. For a given symplectic matrix $M$ and $\omega\in S^1$, denote its splitting number by $S^{\pm}_M(\omega)$. The following properties about the splitting number can be found in Section 9.1 of \cite{L02}.

\begin{pro}\label{splitting number} {\rm(i)} $S^{\pm}_M(\omega)=S^{\pm}_N(\omega)$ for every $N$ connected to $M$ in $\Omega^0(M)$;

{\rm(ii)} $S^{\pm}_{M\dm N}(\omega)=S^{\pm}_M(\omega)+S^{\pm}_N(\omega)$;

{\rm(iii)}$S^{\pm}_M(\omega)=0, \forall \omega\notin\sigma(M)$, where $\sigma(M)$ is the spectrum of $M$;

{\rm(iv)} $S^+_M(\omega)=S^-_M(\bar{\omega})$, where $\bar{\omega}$ denotes the complex conjugate of $\omega\in S^1$;

{\rm(v)} $S^{\pm}_{N_1(1, b)}(1)=1$ if $b=0,1$, and $S^{\pm}_{N_1(1, b)}(1)=0$ if $b=-1$;

{\rm(vi)} $(S^+_{R(\theta)}(e^{\sqrt{-1}\theta}),S^-_{R(\theta)}(e^{\sqrt{-1}\theta}))=(0,1)$, $S^{\pm}_{N_2(e^{\sqrt{-1}\theta},B)}(e^{\sqrt{-1}\theta})=1$.
\end{pro}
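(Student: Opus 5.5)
The plan is to derive all six properties from the definition of the splitting numbers through the $\om$-index theory of \cite{L02}, rather than by direct matrix manipulation. For $M\in\Sp(2d)$, $\om\in S^1$ and any path $\ga\in\mathcal{P}_\tau(2d)$ with $\ga(\tau)=M$, we use
$$S^{\pm}_M(\om)=\lim_{\epsilon\to0^+}i_{\om e^{\pm\sqrt{-1}\epsilon}}(\ga)-i_\om(\ga),$$
where $i_\om$ is the $\om$-index. The first step is to check that this limit does not depend on the choice of $\ga$. Two paths with the same endpoint differ, up to homotopy with fixed endpoints, by a loop. By the homotopy axioms of the $\om$-index, appending a loop shifts $i_\om$ by a Maslov-type quantity that is independent of $\om$, so the shift cancels in the difference.

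The structural properties then follow from standard features of the $\om$-index.

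\textbf{(i)} Take $N\in\Om^0(M)$ and a path $g$ from $M$ to $N$ inside $\Om(M)$. Concatenating $\ga$ with $g$ gives a path ending at $N$. Along $g$ the nullities $\nu_\lambda$ stay constant for every $\lambda\in S^1$. Hence, for each $\om'$ in a small arc around $\om$, the index $i_{\om'}$ changes along $g$ by an amount that is constant as $\om'$ varies in that arc. This constant cancels in the limit, so $S^\pm$ is unchanged.

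\textbf{(ii)} The $\om$-index is additive under $\dm$-sums of paths, and the splitting numbers inherit this additivity.

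\textbf{(iii)} If $\om\notin\sigma(M)$, then $\lambda\mapsto i_\lambda(\ga)$ is locally constant near $\om$, because jumps of $i_\lambda$ can occur only at eigenvalues of the endpoint. Both one-sided limits therefore equal $i_\om(\ga)$, and $S^\pm_M(\om)=0$.

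\textbf{(iv)} Since $\ga$ is a real path, complex conjugation identifies the $\om$-boundary problem with the $\bar\om$-boundary problem, which gives $i_\om(\ga)=i_{\bar\om}(\ga)$. Approaching $\om$ from the $+$ side corresponds to approaching $\bar\om$ from the $-$ side, so $S^+_M(\om)=S^-_M(\bar\om)$.

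The remaining properties (v) and (vi) are explicit computations for the basic normal forms, and I expect them to be the main, though routine, obstacle. The plan is to use the crossing-form formula: for a path with regular crossings, $i_\om$ is a signed count of crossings with $\{\det(\ga(t)-\om I)=0\}$, with the contribution at the endpoint taken as a half-signature.

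For \textbf{(vi)}, take the path $\ga(t)=R(t\th/\tau)$. Compute $i_{\om'}(\ga)$ for $\om'=e^{\sqrt{-1}(\th\pm\epsilon)}$; one perturbation direction produces an additional crossing and the other does not, which yields $(S^+,S^-)=(0,1)$. For $N_2(e^{\sqrt{-1}\th},B)$, first use (i) to deform $B$ within $\Om^0$. Then either compute directly, or rewrite the endpoint using the known normal form identities in \cite{L00}, to obtain $S^\pm=1$.

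For \textbf{(v)}, use the paths ending at $N_1(1,b)$, namely shears for $b=\pm1$ and a constant path for $b=0$. The splitting number is the number of positive, respectively negative, directions of the crossing form restricted to $\ker(M-I)$, minus a correction determined by the Jordan structure. The quickest route is to perturb $N_1(1,b)$ to nearby non-degenerate matrices. For $b=1$ or $b=0$, the perturbations on the two sides of $1$ give different index parities, which forces $S^\pm=1$. For $b=-1$, $N_1(1,-1)$ can be perturbed off $1$ in the hyperbolic direction on both sides without changing the index, which gives $S^\pm=0$.
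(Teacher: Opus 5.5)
\textbf{Verdict: genuine gap in the $N_2$ half of (vi); the remaining parts are fine.}

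Your well-definedness argument, (i)--(iv), and the $R(\th)$ half of (vi) follow the standard route of Section 9.1 of \cite{L02}. The paper gives no proof of its own and only cites that section, and these parts of your proposal are correct. For $R(\th)$ you should state two facts explicitly: the crossings of $R(t\th/\tau)$ are positive, and the lower index $i_\om$ does not count the crossing at the endpoint. This is what yields $(0,1)$ rather than $(1,0)$.

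The gap is that the plan ``deform $B$ within $\Om^0$, then compute directly'' contains no computation, and none can give $S^\pm=1$ for every $B$. Write $B=\begin{pmatrix}b_1&b_2\\b_3&b_4\end{pmatrix}$ as in \cite{L00,L02}. Then the normal forms $N_2(e^{\sqrt{-1}\th},B)$ split into two classes:
\begin{itemize}
\item non-trivial ones, $(b_2-b_3)\sin\th<0$, with $(S^+,S^-)=(1,1)$;
\item trivial ones, $(b_2-b_3)\sin\th>0$, with $(S^+,S^-)=(0,0)$.
\end{itemize}
By (i) the two classes lie in different components of $\Om$, so no deformation passes between them.

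You can check this inside the paper. By Bott's formula $i_1(\ga^m)=\sum_{\om^m=1}i_\om(\ga)$, a $4\times4$ block with $S^\pm=1$ at $e^{\pm\sqrt{-1}\bb}$ adds exactly $2\vf(m\bb/2\pi)-2$ to the iterated index. That is the term Theorem \ref{precise_iteration} assigns to the blocks $N_2(e^{\sqrt{-1}\aa_j},A_j)$, while the blocks $N_2(e^{\sqrt{-1}\bb_j},B_j)$ contribute nothing. So the value $1$ needs the non-triviality hypothesis, which the printed statement omits, and your argument has to use it.

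Direct computation via crossing forms also fails on natural paths such as the one below. At each crossing the kernel of $\ga(t)-\lambda$ is spanned by a Krein-null vector, so the crossing form vanishes and the crossing is degenerate.

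What does work, and also makes your loose parity sketch for (v) precise, is to push the endpoint. Let $\ga_\pm$ be $\ga$ followed by $s\mapsto e^{\pm sJ}M$, $0\le s\le\delta$. Then $i_\om(\ga_-)=i_\om(\ga)$ and $i_\om(\ga_+)=i_\om(\ga)+\nu_\om(M)$, while $i_{\om e^{\pm\sqrt{-1}\epsilon}}$ is unchanged for $\delta\ll\epsilon$. Reading off the jumps of $\lambda\mapsto i_\lambda(\ga_+)$ at the perturbed eigenvalues, using the $R(\th)$ case, (iv) and $0\le S^\pm\le\nu_\om$, determines $S^\pm_M(\om)$:
\begin{itemize}
\item For $N_1(1,1)$ the positive push is hyperbolic, so $S^\pm(1)=1$.
\item For $N_1(1,-1)$ and for $I_2$ the positive push is a small positive rotation, which forces $S^\pm(1)=0$ and $S^\pm(1)=1$ respectively.
\item For $N_2$, use the flow $\dot z=J\nabla H$ on $(q_1,q_2,p_1,p_2)$ with $H=\th(p_1q_2-p_2q_1)+\frac{\sg}{2}|p|^2+\frac{\aa}{2}|z|^2$ and $\sg=\pm1$.
\end{itemize}
In this model, at $\aa=0$ the time-one map is $\begin{pmatrix}R(\th)&-\sg R(\th)\\0&R(\th)\end{pmatrix}$, so $(b_2-b_3)\sin\th=2\sg\sin^2\th$. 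In the coordinates $Q=q_1+\sqrt{-1}q_2$, $P=p_1+\sqrt{-1}p_2$ the linearized flow has eigenvalues $\sqrt{-1}\th\pm\sqrt{-\aa(\sg+\aa)}$.
\begin{itemize}
\item For $\sg=-1$ (non-trivial), the positive push $\aa>0$ moves the double eigenvalue $\om$ off the unit circle, giving $S^\pm=1$.
\item For $\sg=1$ (trivial), it splits $\om$ along the circle into a Krein-negative eigenvalue below $\om$ and a Krein-positive one above, and the same bookkeeping gives $S^\pm=0$.
\end{itemize}
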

\medskip

Now we are going to state a powerful tool in Maslov-type index theory, called {\it Common Index Jump Theorem} from \cite{LZ02}, which has an enhanced version and a generalized version (cf. \cite{DLW16, DLLW24}), also see \cite{CGG25, GG20} where it is called {\it Index Recurrence Theorem}. Here, we list the results we need in these versions. Let $C(M)=\sum_{0<\theta<2\pi} S^-_M(e^{\sqrt{-1}\th})$.

\begin{thm}\label{CIJT}{\rm(cf. \cite{DLW16}, \cite{LZ02})}
Let $\Phi_i\in\mathcal{P}_{\tau_i}(2d)$ for $i=1,\cdots,q$ be a finite collection of symplectic paths with positive mean
indices $\hat{\mu}(\Phi_i)$. Let $M_i=\Phi_i(\tau_i)$ and $\delta>0$ be sufficiently small. We extend $\Phi_i$ to $[0,+\infty)$ by (\ref{3.9}) inductively. Then for any fixed $\bar{m}\in \N$,
 there exist infinitely many $(q+1)$-tuples
$(N, m_1,\cdots,m_q) \in \N^{q+1}$ such that the following hold for all $1\le i\le q$ and $1\le m\le \bar{m}$,
\bea
 \nu(\Phi_i)&=& \nu(\Phi_i^{2m_i\pm m}),\lb{v2m_i pm m}   \\
\mu_-(\Phi_i^{2m_i+m}) &=& 2N+\mu_-(\Phi_i^m),                         \lb{2m_i+1}\\
\mu_+(\Phi_i^{2m_i-m}) &=&  2N-\mu_-(\Phi_i^m)-2(S^+_{M_i}(1)+Q_i(m))+\nu(\Phi_i^m),  \lb{2m_i-1}\\
\mu_-(\Phi_i^{2m_i}) &=& 2N-(S^+_{M_i}(1)+C(M_i)-2\Delta_i), \lb{2m_i}
\eea
where $\Delta_i=\sum_{0<\{\frac{m_i\theta}{\pi}\}<\delta}S^-_{M_i}(e^{\sqrt{-1}\th})$, $Q_i(m) = \sum_{e^{\sqrt{-1}\th}\in\sg(M_i),\atop \{\frac{m_i\th}{\pi}\}
                   = \{\frac{m\th}{2\pi}\}=0}S^-_{M_i}(e^{\sqrt{-1}\th})$, and $\{a\}=a-\max\{y\in\Z|y\le a\}$. 
\end{thm}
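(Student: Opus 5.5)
The plan is to prove the statement by simultaneous Diophantine approximation, following \cite{LZ02} with the refinements of \cite{DLW16}. Everything comes from the explicit iteration formula of Theorem \ref{precise_iteration}, applied after moving each $M_i$ to its basic normal form (\ref{basic normal}) inside $\Om^0(M_i)$. This move is harmless: by Proposition \ref{splitting number}(i) it changes neither the splitting numbers nor the nullities, and the upper and lower indices of iterates are constant on $\Om^0$.

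\emph{Step 1: linear part of the index.} For each $i$, write $\mu_-(\Phi_i^m)$ as a linear term in $m$ plus a bounded correction. The correction is built from the ceilings $2\sum_j\lceil m\th_j/2\pi\rceil$, the terms $\vf(m\aa_j/2\pi)$, and the parity term $\frac{1+(-1)^m}{2}(q_0+q_+)$. The slope of the linear term is $\hat\mu(\Phi_i)-\sum_j\th_j/\pi$, modulo the $N_2$-blocks. The index at $2m_i\pm m$ is then governed by two things:
\begin{itemize}
\item the number $m_i\hat\mu(\Phi_i)$;
\item the fractional parts $\{m_i\th/\pi\}$, where $e^{\sqrt{-1}\th}$ runs over all eigenvalues of $M_i$ on $S^1$.
\end{itemize}
The target is to choose an integer $N$ and integers $m_i$ such that $m_i\hat\mu(\Phi_i)$ differs from $N$ by an explicitly controlled amount, and every $\{m_i\th/\pi\}$ is either $0$ (rational $\th$), or in $(0,\delta)$, or in $(1-\delta,1)$.

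\emph{Step 2: Diophantine selection.} Take the vector $v\in\R^{K}$ with entries $1/\hat\mu(\Phi_i)$ and $\th/(\pi\hat\mu(\Phi_i))$, over all $i$ and all eigenangles $\th$ of $M_i$. Consider the closed subgroup of the torus generated by $\{Nv\}$, and apply Kronecker's theorem (or Dirichlet's theorem) to it. This gives infinitely many $N$, which we may take divisible by a large integer $L$, such that $\{Nv\}$ lies in a prescribed small neighbourhood of a point $\chi$ of that subgroup. Here $\chi$ is chosen with coordinates in $\{0,1\}$ up to $\delta$, which \cite{LZ02} shows is possible.

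Set $m_i=\lfloor N/\hat\mu(\Phi_i)\rfloor+\chi_i$. Take $L$ to be a multiple of $2\bar m$ times the common denominator of all rational $\th/\pi$. Then $e^{\sqrt{-1}2m_i\th}=1$ for every rational $\th$. For irrational $\th$, $(2m_i\pm m)\th\notin2\pi\Z$, since $m\le\bar m$ and $m_i\th/\pi$ is within $\delta$ of an integer. This yields (\ref{v2m_i pm m}).

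\emph{Step 3: substitute into Theorem \ref{precise_iteration}.} Do this for the iterates $2m_i+m$, $2m_i-m$ and $2m_i$, using
\[
\lceil (2m_i\pm m)\th/2\pi\rceil = m_i\th/\pi-\{m_i\th/\pi\}+\lceil \pm m\th/2\pi+\{m_i\th/\pi\}\rceil.
\]
\begin{itemize}
\item For $2m_i+m$: the small positive fractional parts do not change the ceiling, so the result is $2N+\mu_-(\Phi_i^m)$.
\item For $2m_i-m$: the ceiling picks up $-\mu_-(\Phi_i^m)$, corrected by the terms where $\{m_i\th/\pi\}=\{m\th/2\pi\}=0$. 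Counted with $S^-$, these give $Q_i(m)$. Proposition \ref{splitting number} and the relation (\ref{pm_difference}) between $\mu_+$ and $\mu_-$ then produce the stated formula with $S^+_{M_i}(1)$ and $\nu(\Phi_i^m)$.
\item For $2m_i$: each $R(\th)$ or $N_2$ block contributes $-1$ or $+1$ according to whether its fractional part lies in $(1-\delta,1)$ or $(0,\delta)$. This gives $-C(M_i)+2\Delta_i$, and the $N_1(1,\cdot)$ blocks give $-S^+_{M_i}(1)$.
\end{itemize}

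\emph{Main obstacle.} The hardest part is arranging that the non-integer quantity
\[
2m_i\Big(\hat\mu(\Phi_i)-\sum\th/\pi\Big)+2\sum (m_i\th/\pi-\{m_i\th/\pi\})
\]
equals exactly $2N$ for all $i$ simultaneously. This is precisely why $\chi$ must be chosen inside the closure of the orbit of $\{Nv\}$ with the right $0/1$ pattern. I would reduce this to the lemma of \cite{LZ02}: the closure is a finite union of subtori, the point $\chi$ can be reflected into a suitable position, and it exists. I would then check that the errors, of size $O(\delta)$ times bounded constants, are forced to vanish by integrality once $\delta$ is small.
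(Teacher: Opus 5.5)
The paper does not prove this statement; it quotes it from \cite{LZ02,DLW16}. Your outline (basic normal form, the precise iteration formula, simultaneous Diophantine approximation) is the route taken there. As written, however, two steps fail, and the second is the heart of the argument.

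First, the rational eigen-angles. Making $N$ divisible by $L$ says nothing about $m_i=\lfloor N/\hat\mu(\Phi_i)\rfloor+\chi_i$ modulo the common denominator $D$ of the rational $\theta/\pi$. So $e^{\sqrt{-1}2m_i\theta}=1$ does not follow. If $m_i\theta/\pi\notin\Z$ for a rational $\theta$, then $\{m_i\theta/\pi\}\in[1/D,1-1/D]$ lies outside $(0,\delta)\cup(1-\delta,1)$, and (\ref{2m_i+1})--(\ref{2m_i}) break. Long and Zhu put $D$ into $m_i$ instead: $m_i=D(\lfloor N/(D\hat\mu(\Phi_i))\rfloor+\chi_i)$, with entries $1/(D\hat\mu(\Phi_i))$ in $v$. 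Alternatively, since your $v$ also contains the rational angles, a fine enough approximation gives $m_i\theta/\pi$ within $1/D$ of an integer, which forces $m_i\theta/\pi\in\Z$.

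After this repair, what your argument proves is $\nu(\Phi_i^{2m_i\pm m})=\nu(\Phi_i^m)$. That is the form in \cite{DLW16}, and it is consistent with the term $\nu(\Phi_i^m)$ in (\ref{2m_i-1}). The printed $\nu(\Phi_i)$ in (\ref{v2m_i pm m}) is correct only for $m=1$: if $M_i=-I_2$ and $m=2$, then $M_i^{2m_i+2}=I$ for every $m_i$. So you should not claim the printed version.

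Second, your ``main obstacle'' is mis-stated. Your target quantity uses $m_i\theta/\pi-\{m_i\theta/\pi\}=\lfloor m_i\theta/\pi\rfloor$. It therefore ignores that the second ceiling in your splitting identity jumps by $1$ exactly when $\{m_i\theta/\pi\}\in(1-\delta,1)$. Your quantity equals $2m_i\hat\mu(\Phi_i)-2\sum_j\{m_i\theta_j/\pi\}$. This is an integer, not a non-integer, and once $m_i\hat\mu(\Phi_i)\approx N$ it is exactly $2N-2\#\{j:\{m_i\theta_j/\pi\}\in(1-\delta,1)\}$.

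Hence it cannot equal $2N$ whenever some fractional part is near $1$, which is often unavoidable. Take $M_i=R(\theta)\diamond R(2\pi-\theta)$ with $\mu(\Phi_i)=4$. Then $\mu(\Phi_i^m)=4m$, so $2N=8m_i$, while your quantity is $8m_i-2$. No choice of $\chi$ repairs this.

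The correct bookkeeping replaces $\lfloor m_i\theta_j/\pi\rfloor$ by the nearest integer $\lfloor m_i\theta_j/\pi+\frac12\rfloor$. The resulting quantity is $2m_i\hat\mu(\Phi_i)-2\sum_j(m_i\theta_j/\pi-\lfloor m_i\theta_j/\pi+\frac12\rfloor)$. It is an integer within $2|m_i\hat\mu(\Phi_i)-N|+2r\delta<1$ of $2N$, hence equal to $2N$. Consequently every $0/1$ pattern works. Plain Dirichlet simultaneous approximation suffices: choose $N$ with every coordinate of $Nv$ close to an integer, then read off $\chi$.

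The orbit-closure and reflection argument you invoke is what produces the companion tuple with $\Delta_i'=C(M_i)-\Delta_i$ of \cite{HW22}, used in the proof of Theorem \ref{isoinvar}; it is not needed here. Finally, you must take $\delta$ and the approximation error smaller than the distance from $m\theta/2\pi$ to $\Z$, for all $1\le m\le\bar m$ and all irrational $\theta/\pi$. Otherwise the ceilings at $2m_i\pm m$ shift.
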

\begin{rem}\label{arbitrarily large tuple}
Actually, it follows from the proof that the $(q+1)$-tuple can be chosen such that $N$ and $m_i$ are larger than arbitrary real number.
\end{rem}
%
%
%
%

\setcounter{figure}{0}
\setcounter{equation}{0}
\section{Proofs of Main Theorems}\label{Proof}
In this section, we emphasize that the symplectic path is in ${\rm Sp}(2n-2)$ when applying the Maslov-type index theory to Reeb dynamics on star-shaped hypersurfaces in $\R^{2n}$.
\subsection{Proof of Theorem \ref{weak condition non-hyperbolic}}

We argue by contradiction. Changing the sign of the contact form if necessary, we can make the following assumption.
\medskip

{\bf (F1)} {\it There are finitely many simple closed Reeb orbits. Those with positive mean index are denote by $x_1,x_2,\cdots,x_r$ and $x_1$ is hyperbolic.}

\medskip

As in \cite{CGGM26}, fix a Hamiltonian $H\in\mathcal{H}$ with sufficiently large slope and satisfying the requirement of Theorem 4.1(Crossing Energy Theorem) in that paper. We will analyze the degree distance or action difference from $\hat{x}_1^{2m_1}$ to other closed Reeb orbits $x_i^j$ with period less than $slope(2m_1H)$. Except in the case of $-\bar{m}\le j-2m_i<0$, the degree distance and the action difference can be analyzed as in \cite{CGGM26}.

However, in this case, we will only prove that the distance between the degree of $\hat{x}_1^{2m_1}$ and ${\rm supp}\mathrm{HF}(\tilde{x_i}^j)$, not the larger set $[\mu_-(x_i^{j}),\mu_+(x_i^{j})+1]$ as in \cite{CGGM26}, is greater than or equal to 2. Here for a $1$-periodic orbit $\gamma$ of $H$, we denote by $\gamma^k$ the orbit $\gamma(kt)$, which is a $1$-periodic orbit of $kH$.
More precisely, we will prove
\begin{lem}\label{crucial lemma2} Take a closed Reeb orbit $x_i^j$ with period less than $slope(2m_1H)$.

{\rm(i)} If $j=2m_i$, then either $$|\mathcal{A}_{2m_1H}(\tilde{x_i}^j)-\mathcal{A}_{2m_1H}(\tilde{x_1}^{2m_1})|<\sigma$$ or $$|\mathcal{A}_{2m_1H}(\tilde{x_i}^j)-\mathcal{A}_{2m_1H}(\tilde{x_1}^{2m_1})|>C,$$
where $\sigma$ and $C$ are positive numbers determined in Theorem 4.1 and 4.3 of \cite{CGGM26}.

{\rm(ii)} If $|j-2m_i|>\bar{m}$,
then the distance from $ \mu(\hat{x}_1^{2m_1})$ to $[\mu_-(x_i^j),\mu_+(x_i^j)+1]$ is not less than 2.

{\rm(iii)} If $0<j-2m_i\le\bar{m}$, then the distance from $ \mu(\hat{x}_1^{2m_1})$ to $[\mu_-(x_i^j),\mu_+(x_i^j)+1]$ is not less than 2.

{\rm(iv)} If $-\bar{m}\le j-2m_i<0$, then the distance from $ \mu(\hat{x}_1^{2m_1})$ to ${\rm supp}\mathrm{HF}(\tilde{x_i}^{j})$ is not less than 2.
\end{lem}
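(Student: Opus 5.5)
The plan rests on the fact that $x_1$ is hyperbolic. Hence $P_{x_1}$ has no eigenvalue on the unit circle, every iterate $x_1^k$ is non-degenerate, and $S^{\pm}_{M_1}\equiv0$, $C(M_1)=0$, $\Delta_1=0$. I apply Theorem \ref{CIJT} to all $x_1,\dots,x_r$, with $\bar m$ large (fixed below) and with $N,m_i$ arbitrarily large as in Remark \ref{arbitrarily large tuple}. Then (\ref{2m_i}) gives $\mu(x_1^{2m_1})=2N$, so by (\ref{generator degree}) $|\hat x_1^{2m_1}|=2N+1$. In every case the task is to keep the relevant index set of $x_i^j$ outside $(2N-1,2N+3)$.

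For (i), I would repeat the action argument of \cite{CGGM26}. In the Diophantine approximation behind Theorem \ref{CIJT}, I include the periods $T_i$ among the quantities approximated. The differences $2m_iT_i-2m_1T_1$ then either vanish up to an arbitrarily small error, or are bounded away from $0$ by a constant depending only on the finitely many rational relations among the data. Since $\mathcal A_{2m_1H}(\tilde{x_i}^j)$ is controlled by the period of $x_i^j$ via the convexity of $h$, this yields exactly the dichotomy with $\sigma$ and $C$.

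For (ii), I would use only mean indices. By (\ref{mu-hat{mu}}), $|\mu_\pm(x_i^j)-j\hat\mu(x_i)|\le n-1$. The approximation in Theorem \ref{CIJT} gives $|2m_i\hat\mu(x_i)-2N|$ bounded independently of $N$; in fact (\ref{2m_i}) shows it is at most $2n-2$. Hence $|j-2m_i|>\bar m$ implies that the interval $[\mu_-(x_i^j),\mu_+(x_i^j)+1]$ lies at distance at least $\bar m\hat\mu(x_i)-4n$ from $2N$. Choosing $\bar m>(4n+4)/\min_i\hat\mu(x_i)$ makes this distance at least $2$.

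For (iii), write $j=2m_i+m$ with $1\le m\le\bar m$. By (\ref{2m_i+1}), $\mu_-(x_i^j)=2N+\mu_-(x_i^m)$. Since $x_i^m$ has positive mean index, the hypothesis gives $\mu_-(x_i^m)\ge\min\{\max\{3,2+m_1/2\},n\}\ge3$, using $n\ge3$. So $\mu_-(x_i^j)\ge2N+3$, and the distance from $2N+1$ is at least $2$.

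Case (iv) is the main obstacle. Write $j=2m_i-m$. The crude bound from (\ref{2m_i-1}) and (\ref{support range}) gives
\[
\mu_+(x_i^j)+1=2N+1-\mu_-(x_i^m)-2S^+_{M_i}(1)-2Q_i(m)+\nu(x_i^m),
\]
and this can exceed $2N-1$ when $\nu(x_i^m)$ is large. So I would refine the support of the local homology. Using Theorem \ref{homotopic normal form}, I split $P_{x_i^j}$ into the part with eigenvalue $1$, of dimension $2m_1(x_i^j)$, and a non-degenerate remainder. Since the iteration is admissible by (\ref{v2m_i pm m}), $\mathrm{HF}(\tilde{x_i}^j)$ should be the local homology of the totally degenerate part shifted by the index of the non-degenerate part. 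Its support therefore lies in an interval of length at most $m_1(x_i)+1$, and it is also pinned to $[\hat\mu(x_i^j)-m_1/2,\ \hat\mu(x_i^j)+m_1/2+1]$ after this shift.

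I would then use the symmetry between (\ref{2m_i+1}) and (\ref{2m_i-1}) in Theorem \ref{CIJT}: $\hat\mu(x_i^{2m_i-m})=2N-\hat\mu(x_i^m)$, up to the splitting-number terms that also appear in (\ref{2m_i-1}). Combining this with $\mu_-(x_i^m)\ge\hat\mu(x_i^m)-(n-1)$ and with the local estimate $|\hat\mu-\mu_-|\le m_1/2$ coming from the degenerate block, the top of the support should satisfy $\max\mathrm{supp}\le2N+1-(\mu_-(x_i^m)-m_1/2)$.

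The final step is a case split on which term realises the minimum in the hypothesis. If $\mu_-(x_i^m)\ge 2+m_1/2$, the displayed bound gives $\max\mathrm{supp}\le 2N-1$ directly. If instead $\mu_-(x_i^m)\ge n\ge 2+m_1/2$ fails to use the $m_1$ term, I fall back on the crude mean-index bound with $m_1\le 2n-2$.

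The delicate point is bookkeeping $S^+_{M_i}(1)$ and $Q_i(m)$ against $\nu(x_i^m)$ blockwise. Proposition \ref{splitting number}(v) gives $S^+=1$ exactly on the $N_1(1,0)$ and $N_1(1,1)$ blocks, so the loss $\nu(x_i^m)-2S^+_{M_i}(1)$ comes only from $N_1(1,-1)$ blocks and the analogous blocks at roots of unity. I would try to show that these blocks are precisely the ones accounted for by the $m_1/2$ term in the hypothesis.
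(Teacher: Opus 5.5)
Parts (i)--(iii) are fine. In (iii), applying the hypothesis directly to the iterate $x_i^m$ in (\ref{2m_i+1}) is slightly more direct than the paper's route through $\mu_-(x_i)$ and Proposition \ref{large index growth}.

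The gap is in (iv), and it is not where you place it. When $x_i$ is not totally degenerate, no refinement of local homology is needed. The blockwise bookkeeping from your last paragraph gives $\nu(x_i)-2S^+_{M_i}(1)=p_{+,i}-p_{-,i}\le m_1(x_i)/2$. Together with $\mu_-(x_i)\ge 2+m_1(x_i)/2$, formula (\ref{2m_i-1}) already yields $\mu_+(x_i^{2m_i-1})+1\le 2N-1$.

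The real obstruction is the totally degenerate case, where the hypothesis only gives $\mu_-(x_i)\ge n=1+m_1(x_i)/2$. Take $P_{x_i}$ with basic normal form $N_1(1,-1)^{\dm(n-1)}$ (so $p_{+,i}=n-1$) and $\mu_-(x_i)=n$; the hypothesis allows this. Then $\hat\mu(x_i^{2m_i-1})=2N-n$ and $\mu_+(x_i^{2m_i-1})+1=2N$. Consequently:
\begin{itemize}
\item the a priori window $[\mu_-,\mu_++1]=[2N-n,2N]$ contains $2N$;
\item your pinned window $[\hat\mu-m_1/2,\hat\mu+m_1/2+1]=[2N-2n+1,2N]$ also contains $2N$, even granting your splitting of the local homology;
\item your fallback to the crude mean-index bound likewise stops at $2N$.
\end{itemize}
In every case $2N$ sits at distance $1$ from $|\hat x_1^{2m_1}|=2N+1$. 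No index estimate can exclude this degree. Indeed $2N=\hat\mu(x_i^{2m_i-1})+n$ is exactly the degree in which a symplectically degenerate maximum shows up in non-equivariant local homology.

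The missing idea is a global dynamical one. Suppose $\mathrm{SH}_{2N}(x_i^{2m_i-1})\neq0$. By (\ref{local-splitting}), one of two things happens.
\begin{itemize}
\item $\mathrm{SH}^{S^1}_{2N}(x_i^{2m_i-1})\neq0$. This forces $\mathrm{SH}_{2N+1}(x_i^{2m_i-1})\neq0$, contradicting (\ref{support range}).
\item $\mathrm{SH}^{S^1}_{2N-1}(x_i^{2m_i-1})=\mathrm{SH}^{S^1}_{\hat\mu(x_i^{2m_i-1})+n-1}(x_i^{2m_i-1})\neq0$. The iterations $2m_i-1$ are admissible by (\ref{v2m_i pm m}) and can be taken arbitrarily large by Remark \ref{arbitrarily large tuple}. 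Proposition \ref{SDM} then makes $x_i$ a simple SDM, which yields infinitely many closed orbits and contradicts (F1).
\end{itemize}

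There is a smaller point in the same totally degenerate case, for $m\ge2$. Applying the hypothesis to $x_i^m$ again only gives $\mu_-(x_i^m)\ge n$, hence only the bound $2N$. You need actual index growth. For example, Proposition \ref{large index growth} with $\mu_-(x_i)-e(M_i)/2\ge n-(n-1)=1$ makes $\mu_+$ strictly increasing along iterates. This gives $\mu_+(x_i^{2m_i-m})+1\le 2N-1$ for $m\ge2$, as in the paper, and reduces everything to the case $m=1$ treated above.
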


As was suggested in \cite{CGGM26}, Lemma \ref{crucial lemma2} is also enough to get a contradiction. Following the ideas provided by Erman \c{C}ineli, we give some details here. Set $a=\mathcal{A}_{2m_1H}(\tilde{x_1}^{2m_1})$ and fix a sufficiently small $\epsilon>0$ such that $a$ is the only critical value of $\mathcal{A}_{2m_1H} $ in $(a-\epsilon,a+\epsilon)$, then $\hat{x}_1^{2m_1}$ is a non-trivial cycle in $\mathrm{CF}^{(a-\epsilon,a+\epsilon)}(2m_1H)$ since $n(\hat{x}_1^{2m_1},\check{x}_1^{2m_1})=0$ and other closed orbits connected to $\check{x}_1^{2m_1}$ must have action outside $(a-\epsilon,a+\epsilon)$. We divide the argument into two cases:

\medskip

{\bf Case 1}: {\it $\partial \hat{x}_1^{2m_1}$ is a nontrivial cycle in $\mathrm{CF}^{a-\epsilon}(2m_1H)$.}

\medskip

Set $b=\inf\{c<a|[\partial \hat{x}_1^{2m_1}]\in \im(\mathrm{HF}^c(2m_1H)\to \mathrm{HF}^{a-\epsilon}(2m_1H))\}$.
 By Theorem 4.1 of \cite{CGGM26}, each closed orbit entering into $\partial \hat{x}_1^{2m_1}$ has action not more than $a-\sigma$, so $b\le a-\sigma$. By Theorem 4.3 of \cite{CGGM26}, the natural inclusion map from $\mathrm{HF}^c(2m_1H)$ to $\mathrm{HF}^{a-\epsilon'}(2m_1H)$ must be zero for some $0<\epsilon'<\epsilon$, when $a-c>C$, so is the map to $\mathrm{HF}^{a-\epsilon}(2m_1H)$ and $b\ge a-C$. In particular, $b$ is finite and $[\partial \hat{x}_1^{2m_1}]\notin \im(\mathrm{HF}_{\mu(\hat{x}_1^{2m_1})-1}^{b-\epsilon_1}(2m_1H)\to \mathrm{HF}_{\mu(\hat{x}_1^{2m_1})-1}^{b+\epsilon_1}(2m_1H)) $. By (\ref{long exact sequence}),
$\mathrm{HF}_{\mu(\hat{x}_1^{2m_1})-1}^{(b-\epsilon_1,b+\epsilon_1)}(2m_1H)\neq0$. Hence, by (\ref{support range}) and (\ref{filter local}),
 there exists a Hamiltonian 1-periodic orbit $y$ with action in $[a-C,a-\sigma]$ and $\mu(\hat{x}_1^{2m_1})-1\in \mathrm{supp}\mathrm{HF}(y)$. When $m_i$ is sufficiently large, $y$ must be a non-constant $1$-periodic orbit of $2m_1H$ with positive mean index. By Lemma \ref{crucial lemma2}, such closed orbit can not exist whatever its iteration number is, a contradiction!

\medskip

{\bf Case 2}: {\it $\partial \hat{x}_1^{2m_1}$ is a boundary in $\mathrm{CF}^{a-\epsilon}(2m_1H)$.}
\medskip

By (\ref{long exact sequence}),
there exists a cycle $\beta$ in $\mathrm{CF}^{a+\epsilon}(2m_1H)\backslash \mathrm{CF}^{a-\epsilon}(2m_1H)$ such that $\beta$ and $\hat{x}_1^{2m_1}$ are homologous in $\mathrm{CF}^{(a-\epsilon,a+\epsilon)}(2m_1H)$. Similarly, for sufficiently small $\epsilon$, a chain in $\mathrm{CF}^{(a-\epsilon,a+\epsilon)}(2m_1H)$ containing $\hat{x}_1^{2m_1}$ can not be a boundary, so $\hat{x}_1^{2m_1}$ is a summand of $\beta$ and $\beta$ is nontrivial. Set $b=\sup\{c>a|\beta \notin \ker(\mathrm{HF}^{a+\epsilon}(2m_1H)\to \mathrm{HF}^c(2m_1H))\}$. Again by Theorem 4.1 of \cite{CGGM26}, $\beta$ can not be a boundary in $\mathrm{CF}^{a+\sigma}(2m_1H)$, so $b\ge a+\sigma$. By Theorem 4.3 of \cite{CGGM26}, the natural inclusion map from $\mathrm{HF}^{a+\epsilon'}(2m_1H)$ to $\mathrm{HF}^c(2m_1H)$ must be zero for some $0<\epsilon'<\epsilon$ when $c-a>C$, so is the map from $\mathrm{HF}^{a+\epsilon}(2m_1H)$ and $b\le a+C$.  In particular, $b$ is finite and $[\beta]\in \ker(\mathrm{HF}_{\mu(\hat{x}_1^{2m_1})}^{b-\epsilon_1}(2m_1H)\to \mathrm{HF}_{\mu(\hat{x}_1^{2m_1})}^{b+\epsilon_1}(2m_1H))$. By (\ref{long exact sequence}),
$\mathrm{HF}_{\mu(\hat{x}_1^{2m_1})+1}^{(b-\epsilon_1,b+\epsilon_1)}(2m_1H)\neq0$. Hence, by (\ref{support range}) and (\ref{filter local}),
 there exists a Hamiltonian 1-periodic orbit $y$ with action in $[a+\sigma,a+C]$ and $\mu(\hat{x}_1^{2m_1})+1\in \mathrm{supp}\mathrm{HF}(y)$. As in Case 1, this is impossible due to Lemma \ref{crucial lemma2}.

Strictly speaking, the element of our complex should look like $\widehat{x_i^j}$ or $\widecheck{x_i^j}$, i.e., take the reparametrizations after iterating, but it can be seen from the definition that $\hat{x_i}^j=\widehat{x_i^j}$ and $\check{x_i}^j=\widecheck{x_i^j}$. Also, it holds $\tilde{x_i}^j=\widetilde{x_i^j}$.

Therefore, it finished the proof of Theorem 1.1 and remains to prove Lemma \ref{crucial lemma2}.\hfill\hb

\begin{proof}[Proof of Lemma \ref{crucial lemma2}]
The proof of (i) and (ii) are the same as \cite{CGGM26}. Here we only prove (iii) and (iv) below.
We can apply Theorem 3.5 to closed orbits $x_1,x_2,\cdots,x_r$ and it follows from the index condition that, for $2\le m\le \ol{m}$, there holds
\bea\mu_-(x_i^{2m_i+m})&>&\mu_-(x_i^{2m_i+1})=2N+\mu_-(x_i)\ge 2N+3,\lb{index estimate 1}\\
    \mu_(x_1^{2m_1})+1&=&2N+1,\lb{index estimate 2}\\
\mu_+(x_i^{2m_i-m})+1&<&\mu_+(x_i^{2m_i-1})+1=2N-\mu_-(x_i)-2S^+_{M_i}(1)+\nu(x_i)+1\nn\\
                                       &=&2N-\mu_-(x_i)-p_{-,i}+p_{+,i}+1\nn\\
                                       &\le&\left\{\begin{array}{ll} 2N,\quad \mbox{if}\ x_i \mathrm{\ is\ totally\ degenerate}\\
           2N-1, \quad\mbox{otherwise}  \end{array}\right. ,\lb{index estimate 3}
\eea
where $M_i$ is the linearized Poincar\'{e} map of $x_i$, the first inequalities of (\ref{index estimate 1}) and (\ref{index estimate 3}) come from Proposition \ref{large index growth}, the second inequality of (\ref{index estimate 1}) is due to $n\ge3$ and the second equality of (\ref{index estimate 3}) comes from (\ref{basic normal}) and (ii) of Proposition 3.4. For the second inequality of (\ref{index estimate 3}), the totally degenerate case comes from (\ref{sum_n-1}) and the other case comes from Theorem 1.4.1 and Case 1.8.1-1.8.2 of \cite{L02}.

The statement (iii) of Lemma 4.1 immediately follows from (\ref{index estimate 1}) and (\ref{index estimate 2}).

If $2N\notin{\rm supp}\mathrm{HF}(\tilde{x_i}^{2m_i-1})$ for sufficiently large $(N,m_1,\cdots,m_r)$, then by (\ref{support range}) we have ${\rm supp}\mathrm{HF}(\tilde{x_i}^{2m_i-1})\subset(-\infty, 2N-1]$ and the proof of (iv) is finished. Otherwise, the equality in (\ref{index estimate 3}) holds for some totally degenerate closed orbit $x_i$, i.e., $\mu_-(x_i)=n,p_{+,i}=n-1$, implying that $\hat{\mu}(x_i^{2m_i-1})=(2m_i-1)n=\mu_+(x_i^{2m_i-1})-(n-1)=2N-n$ by (\ref{pm_difference}) and (\ref{iteration formula}).
Moreover, as is stated in Section \ref{Filtered and local homology}, we have $\mathrm{SH}_{2N}(x_i^{2m_i-1})\neq0$. By (\ref{local-splitting}), there holds $$\mathrm{SH}_{2N}^{S^1}(x_i^{2m_i-1})\oplus\mathrm{SH}_{2N-1}^{S^1}(x_i^{2m_i-1})\neq0.$$

If $\mathrm{SH}_{2N}^{S^1}(x_i^{2m_i-1})\neq0$, then by (\ref{local-splitting}) again, there holds $$\mathrm{SH}_{\mu_+(x_i^{2m_i-1})+2}(x_i^{2m_i-1})=\mathrm{SH}_{2N+1}(x_i^{2m_i-1})\neq0,$$ contradicting to (\ref{support range}).

If $\mathrm{SH}_{2N-1}^{S^1}(x_i^{2m_i-1})\neq0$, then we have
 $$ \mathrm{SH}^{S^1}_{\hat{\mu}(x_i^{2m_i-1})+n-1}(x_i^{2m_i-1})=\mathrm{SH}^{S^1}_{2N-1}(x_i^{2m_i-1})\neq 0,$$
 It follows from Proposition \ref{SDM} that $x_i$ is a simple SDM and there exist infinitely many simple closed orbits, which is a contradiction to (F1), see e.g. \cite{DQ26,GG20}.
\end{proof}

\subsection{Proof of Theorem \ref{3elliptic}}
Assume that there are exactly $3$ simple closed Reeb orbits $x_1,x_2,x_3$. Note that in the strongly non-degenerate case, the diamond product (\ref{basic normal}) in Theorem \ref{precise_iteration} becomes \be R(\theta_1)\dm R(\theta_2)\dm\cdots\dm R(\theta_r)\dm N_2(e^{i\varphi_1},B_1)\dm\cdots\dm N_2(e^{i\varphi_s},B_s)\dm H,\lb{nondg normalform}\ee where $\frac{\theta_j}{\pi},\frac{\varphi_j}{\pi}\notin \Q$, and the index formula (\ref{iteration formula}) becomes
\be \mu(\Phi^m)=m(\mu_-(\Phi)-r ) + 2\sum_{j=1}^r\left\lceil\frac{m\th_j}{2\pi}\right\rceil - r. \lb{nondg formula} \ee

{\bf Claim 1. } {\it No good closed orbit has Conley-Zehnder index $1$.}

\begin{proof}
Assume that there exists simple closed characteristic $x$ such that $\mu(x^m)=1$ and $\mu(x^m)\equiv\mu(x)({\rm mod\ 2})$ for some $m\in \N$. Note that the index of a symplectic path ending at $R(\theta)$ and $N_2(\omega,B)$ is odd and even, respectively, see e.g., Theorem 8.1.7, 8.2.3, 8.2.4 of \cite{L02}. Then considering the parity of the initial index and its change after iteration, by (\ref{nondg normalform}), $P_x$ must be connected to $R(\theta)\dm H(2)$ or a hyperbolic matrix $H(2)\dm H(2)$. In the latter case, there holds $m\mu(x)=1$ and then $\mu(x)=1$, a contradiction to the assumption that no simple closed orbit has index $1$. Now we look at the former case.

In this case, there holds \be m(\mu(x)-1)+2\left\lceil\frac{m\theta}{2\pi}\right\rceil-1=1.\ee
After shifting terms, we obtain a contradiction that \be 2\le|\mu(x)-1|=\frac{-2+2\lceil\frac{m\theta}{2\pi}\rceil}{m}\le \frac{2(m-1)}{m}.\ee
\end{proof}

{\bf Claim 2. } {\it There holds $\hat{\mu}(x_1),\hat{\mu}(x_2),\hat{\mu}(x_3)>0$}.

\begin{proof}
Due to Theorem 1.6 of \cite{DLLW24}, it suffices to show that there exists no simple closed characteristic with negative mean index.

At first, by Proposition \ref{good orbit complex} and (\ref{contact sphere homology}),
it is impossible that exactly one or three simple closed characteristics have negative mean index. Then it remains to exclude the case that exactly two simple closed characteristics have negative mean index.

In this case, let us assume that $\hat{\mu}(x_1),\hat{\mu}(x_2)<0$ and $\hat{\mu}(x_3)>0$. Again, by Proposition \ref{good orbit complex} and (\ref{contact sphere homology}),
we have $2+2\N\subset \{\mu(x_3^m)\}_{m\in\N}$ and $\mu(x_3)$ is even. For each $k\in \N$, $2+2k$ is achieved by $\{\mu(x_3^m)\}_{m\in\N}$ exactly once, i.e. there exists exactly one $m\in\N$ such that $\mu(x_3^m)=2+2k$. So $\mu(x_3)\neq 0$, otherwise we have $\mu(x_3^{2m_3\pm1})=2N$ by Theorem \ref{CIJT}. Now we have $\mu(x_3)\ge 2$ by (\ref{mu-hat{mu}}) and, by (\ref{iteration formula}), $\mu(x_3^m)$ is increasing with respect to $m$.

If $\mu(x_3)> 2$, then $\mu(x_3^m)=2m+2$ or, $\mu(x_3^{2m-1})=2m+2$ and $x_3^{2m}$ is bad, which is impossible for a $4$ dimensional strongly non-degenerate symplectic path by (\ref{iteration formula}). In fact, this is obvious in the case of $r=0,1$, where $r$ appears in Theorem \ref{precise_iteration}. When $r=2$, two rotation angles must be conjugate to keep the mean index an even number, i.e., $P_{x_3}$ is connected to $R(\theta)\dm R(2\pi-\theta)$ for some $\theta/\pi \notin \Q$. Then by (\ref{iteration formula}), we have \be\mu(x_3^m)=m(4-2)+2\left\lceil\frac{m\theta}{2\pi}\right\rceil
+2\left\lceil m-\frac{m\theta}{2\pi}\right\rceil-2=4m. \nn
\ee
So there holds $\mu(x_3)= 2$.

Hence, it follows from (\ref{contact sphere homology})
and Claim 1 that in the complex in Proposition \ref{good orbit complex}, $x_3$ must be cancelled by a good closed characteristic
with index 3, whose underlying simple closed characteristic is different from $x_3$ and has positive mean index by (\ref{mu-hat{mu}}), contradicting to our case.
\end{proof}
\begin{rem}
Note that, this claim also follows from the very recent \cite{CGG26}, which states that for star-shaped hypersurfaces of arbitrary dimension, all closed characteristics have non-negative mean index when they are finitely many.
\end{rem}

{\bf Claim 3. } {\it No good closed orbit has Conley-Zehnder index $-1,0$.}

\begin{proof}
If there is a good closed characteristic $y$ with Conley-Zehnder index $-1$, then similar to the above, $P_y$ is connected to $R(\theta)\dm H(2)$ or a hyperbolic matrix $H(2)\dm H(2)$. In both cases, $\mu(y)=-2+\frac{\theta}{\pi}$ or $-1<0$ due to (\ref{nondg formula}), and so is that of the underlying simple closed characteristic. This contradicts to Claim 2.

If there is a good closed characteristic $y$ with index $0$, then by (\ref{contact sphere homology}) and Claim 1, in the complex in Proposition \ref{good orbit complex}, $y$ must be cancelled by a good closed characteristic with index -1, which is impossible as is stated in the previous paragraph.
\end{proof}
It follows from Claims 1-3 and (\ref{mu-hat{mu}}) that every good closed characteristic has index no less than 2. By Theorem 1.2 of \cite{DLLW24}, all $3$ simple closed characteristics and their iterations have even Conley-Zehnder index.
In particular, the contact form is lacunary, i.e., all closed Reeb orbits have indices with the same parity, and hence dynamically convex. By Theorem B of \cite{CGGM26}, all $3$ closed characteristics are not locally maximal and hence non-hyperbolic.
And by Proposition \ref{good orbit complex} and (\ref{contact sphere homology}),
at least two simple closed characteristics, saying, $x_1, x_2$ satisfy $$\mu(x_1^{2m_1})=2N-2,\ \mu(x_2^{2m_2})=2N+2.$$ Due to (\ref{2m_i}), there holds $$\mu(x_i^{2m_i})=2N+\sum_{0<\{\frac{m_i\theta}{\pi}\}<\delta}S^-_{M_i}(e^{\sqrt{-1}\th})-\sum_{1-\delta<\{\frac{m_i\theta}{\pi}\}<1}S^-_{M_i}(e^{\sqrt{-1}\th}).$$ Then it follows from Theorem \ref{homotopic normal form} and Proposition \ref{splitting number} that
the Poincar\'{e} map of $x_1,x_2$ are connected to $$R(\theta_1)\dm R(\theta_2)\ \mathrm{(}\theta_1/\pi,\theta_2/\pi\notin\Q,\theta_1+\theta_2\neq 2\pi\mathrm{)}$$ and the Poincar\'{e} map of $x_3$ is connected to $$R(\varphi_1)\dm R(\varphi_2)\ \mathrm{(}\varphi_1/\pi,\varphi_2/\pi\notin\Q\mathrm{)}\ \mathrm{or}\ N_2(e^{\sqrt{-1}\varphi},b)\ \mathrm{(}\varphi/\pi\notin \Q\mathrm{)}.$$ It is a standard argument to show further that $x_1, x_2$ are irrationally elliptic, for example see \cite{DQ26,W22}. \hfill\hb

\medskip

As pointed in the introduction, below we will give an irrational ellipsoid in $\R^6$ with a simple closed characteristic such that its $2m_i$-th iteration never has index $2N\pm2$, where $N$ and $m_i$ are obtained by Common Index Jump Theorem. Moreover, it does not satisfy the non-resonance condition in Corollary F of \cite{CGG25}.
\begin{exa}
Denote by $E(a_1,a_2,a_3)\subset \R^6$ the ellipsoid $$\left\{(z_1,z_2,z_3)\in\C^3\bigg{|}\frac{{|z_1|}^2}{a_1}+\frac{{|z_2|}^2}{a_2}+\frac{{|z_3|}^2}{a_3}=1\right\}.$$
Let $(a_1,a_2,a_3)=(\sqrt{2},\sqrt{3},\frac{\sqrt{6}}{\sqrt{2}+\sqrt{3}})$, then there are exactly $3$ simple closed characteristics $x_1,x_2,x_3$ with linearized Poincar\'{e} map and iteration index as follows.
$$P_1=R\left(2\pi\frac{\sqrt{2}}{\sqrt{3}}\right)^{\dm2},\ \mu(x_1^m)=4m+4\left\lceil m\frac{\sqrt{2}}{\sqrt{3}}\right\rceil-2,\ \hat{\mu}(x_1)=4+4\frac{\sqrt{2}}{\sqrt{3}},$$
$$P_2=R\left(2\pi\frac{\sqrt{3}-\sqrt{2}}{\sqrt{2}}\right)^{\dm2},\ \mu(x_2^m)=8m+4\left\lceil m\frac{\sqrt{3}-\sqrt{2}}{\sqrt{2}}\right\rceil-2,\ \hat{\mu}(x_2)=4+4\frac{\sqrt{3}}{\sqrt{2}},$$
$$P_3=R\left(2\pi\frac{\sqrt{3}}{\sqrt{2}+\sqrt{3}}\right)\dm R\left(2\pi\frac{\sqrt{2}}{\sqrt{2}+\sqrt{3}}\right),\ \mu(x_3^m)=4m,\ \hat{\mu}(x_3)=4.$$
Since there are conjugate rotation angles in $P_3$, for $(N,m_1,m_2,m_3)$ in Theorem \ref{CIJT}, there must hold $\mu(x_3^{2m_3})=2N$. And the non-resonance condition is not satisfied due to $\frac{\hat{\mu}(x_3)}{\hat{\mu}(x_1)}+\frac{\hat{\mu}(x_3)}{\hat{\mu}(x_2)}=1$.
\end{exa}

\subsection{Proof of Theorem \ref{isoinvar}}
Similar to the proof of Theorem \ref{weak condition non-hyperbolic}, we argue by contradiction. Changing the sign of the contact form if necessary, we can make the following assumption.

\medskip

{\bf (F2)} {\it There are finitely many simple closed Reeb orbits. Those with positive mean index are denoted by $x_1,x_2,\cdots,x_r$, and $x_1$ is strongly non-degenerate and locally maximal.}

\medskip

As in \cite{CGGM26}, fix a Hamiltonian $H\in\mathcal{H}$ with sufficiently large slope and satisfying the requirement of Theorem 4.1 in that paper. The key result of the proof is as follows.

\begin{lem}\label{crucial lemma} Given a tuple $(N,m_1,\cdots,m_r)$ obtained in Theorem \ref{CIJT}, either for any closed orbit $y\in{\rm CF}(2m_1H)$ with $n( y,\hat{x}_1^{2m_1})\neq0$ the energy of the Floer cylinder in the cascade connecting them goes to infinity as $ m_1\to \infty$  or the same holds for any closed orbit $y\in{\rm CF}(2m_1H)$ with $n( y,\check{x}_1^{2m_1})\neq0$.\end{lem}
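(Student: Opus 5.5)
We plan to argue by contradiction, in the spirit of the index-distance arguments behind Lemma \ref{crucial lemma2} and Theorem 4.1 of \cite{CGGM26}. Suppose the conclusion fails along a sequence of tuples with $m_1\to\infty$. Then there are orbits $y$ with $n(y,\hat{x}_1^{2m_1})\neq0$ and orbits $y'$ with $n(y',\check{x}_1^{2m_1})\neq0$ whose connecting Floer cylinders have energy bounded uniformly in $m_1$. The energy of a cascade equals the action difference. So both $y$ and $y'$ have action within a bounded distance $E$ of $a=\mathcal{A}_{2m_1H}(\tilde{x}_1^{2m_1})$.

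The first step uses the indices. By (\ref{2m_i}) and strong non-degeneracy of $x_1$, the index $\mu(x_1^{2m_1})$ equals $2N-(S^+_{M_1}(1)+C(M_1)-2\Delta_1)$, with $S^+_{M_1}(1)=0$. The degrees are $|\hat{x}_1^{2m_1}|=\mu(x_1^{2m_1})+1$ and $|\check{x}_1^{2m_1}|=\mu(x_1^{2m_1})$. A differential entering $\hat{x}_1^{2m_1}$ comes from degree $\mu(x_1^{2m_1})+2$, and one entering $\check{x}_1^{2m_1}$ comes from degree $\mu(x_1^{2m_1})+1$.

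Next we locate $y$ and $y'$ among the candidate orbits, which are the iterates $x_i^j$. Their actions are $jT_i$ up to the constant shift. Bounded action difference forces $j$ close to $2m_1T_1/T_i$. Using the rational-independence and approximation structure built into Theorem \ref{CIJT}, namely that $m_iT_i$ are nearly equal to $N$ times a fixed quantity when the $m_i$ are chosen as in \cite{CGGM26}, this should force $|j-2m_i|\le\bar m$ for $\bar m$ large.

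For such $j\neq 2m_i$, we estimate degrees using (\ref{2m_i+1}) and (\ref{2m_i-1}) together with the index condition $\mu_-\ge n+1$ and Proposition \ref{large index growth}. For $j>2m_i$, all generators have degree at least $2N+n+1$. For $j<2m_i$, the support lies in degrees at most $2N-n$, by the computation of (\ref{index estimate 3}) with $\mu_-(x_i)\ge n+1$. Meanwhile $\mu(x_1^{2m_1})\in[2N-(n-1),2N+(n-1)]$, since $|C(M_1)-2\Delta_1|\le n-1$. Hence neither $y$ nor $y'$ can be an iterate with $j\neq 2m_i$. The orbits $y,y'$ must therefore be of the form $x_i^{2m_i}$, including possibly $x_1^{2m_1}$ itself, whose self-connections vanish. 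The constant orbits are excluded for large $N$ by degree. The same applies to orbits with non-positive mean index: by assumption (F2) these have bounded-above index for the relevant actions, so they are excluded as well.

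The main obstacle is the remaining case where $y=\hat{x}_i^{2m_i}$ or $\check{x}_i^{2m_i}$ with $i\neq1$. Here I would use local maximality of $x_1$ through the Crossing Energy Theorem (Theorem 4.1 of \cite{CGGM26}). Since $x_1$ is isolated as an invariant set, Floer cylinders asymptotic to $\tilde{x}_1^{2m_1}$ at one end and to another orbit at the other end must cross a fixed isolating neighborhood. This gives a uniform lower bound $\sigma$ on the energy, but it does not by itself give energy tending to infinity.

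To upgrade $\sigma$ to divergence, I would exploit that locally maximal together with strongly non-degenerate forces $x_1$ to be hyperbolic-like in the relevant sense. The intended argument: the crossing energy grows linearly in the iteration when the neighborhood is isolating for the flow, because a cylinder with bounded energy would, after rescaling and taking $m_1\to\infty$, produce in the limit a nontrivial invariant set inside the isolating neighborhood other than $x_1$. That would contradict local maximality. This compactness and rescaling step is where I expect the real work.

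If it succeeds for one of the two ends, we are done. If only bounded-energy connections survive at both ends, then $\hat{x}_1^{2m_1}$ and $\check{x}_1^{2m_1}$ both interact with $x_i^{2m_i}$-generators of equal degree parity. I would then try to rule this out by the parity constraint that $\mu(x_i^{2m_i})$ and $\mu(x_1^{2m_1})$ differ by $2\Delta_i-C(M_i)-(2\Delta_1-C(M_1))$, so that only one of the degrees $\mu(x_1^{2m_1})+1$ and $\mu(x_1^{2m_1})+2$ can be matched.
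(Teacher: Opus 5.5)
\textbf{There is a genuine gap in your degree step, and it is exactly where the paper's key idea is needed.}

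Your bound ``for $j<2m_i$ all degrees are at most $2N-n$'' holds when $x_i$ is non-degenerate, but not in general. In Theorem \ref{isoinvar} only $x_1$ is assumed strongly non-degenerate. In general, (\ref{app_2m_i-m}) gives only
$$\mu_+(x_i^{2m_i-1})=2N-\mu_-(x_i)-p_{-,i}+p_{+,i}\le 2N-2.$$
Equality holds e.g.\ for $M_i$ homotopic to $N_1(1,-1)^{\diamond(n-1)}$ with $\mu_-(x_i)=n+1$, and then $[\mu_-(x_i^{2m_i-1}),\mu_+(x_i^{2m_i-1})+1]=[2N-n-1,2N-1]$.

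Now $\mu(x_1^{2m_1})=2N-C(M_1)+2\Delta_1$ can be as low as $2N-n+1$. Whenever $\mu(x_1^{2m_1})\le 2N-3$, the degrees $|\check{x}_1^{2m_1}|$, $|\hat{x}_1^{2m_1}|$ and $|\hat{x}_1^{2m_1}|+1$ all lie inside this interval. So neither generator can be isolated by degree, and your argument gives nothing.

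The missing ingredient is the symmetric property of the common index jump theorem \cite{HW22}. There is a second tuple $(N',m_1',\dots,m_r')$ with $\Delta_1+\Delta_1'=C(M_1)$, so that $\mu(x_1^{2m_1'})-2N'=-(\mu(x_1^{2m_1})-2N)$. Hence one may assume $\mu(x_1^{2m_1})\in[2N,2N+n-1]$. Then, using $\mu_+(x_i^{2m_i-1})+1\le 2N-1$ and $\mu_-(x_i^{2m_i+1})\ge 2N+n+1$:
\begin{itemize}
\item if $\mu(x_1^{2m_1})\ge 2N+1$, $\check{x}_1^{2m_1}$ is at degree distance at least $2$ from every $[\mu_-(x_i^j),\mu_+(x_i^j)+1]$ with $1\le|j-2m_i|\le\bar m$;
\item if $\mu(x_1^{2m_1})=2N$, $\hat{x}_1^{2m_1}$, of degree $2N+1$, is at distance at least $2$ instead.
\end{itemize}
This is how the paper decides which of the two generators satisfies the lemma.

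\textbf{Your other steps also need to be replaced.}

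First, the action localization is wrong. Theorem \ref{CIJT} makes $2m_i\hat{\mu}(x_i)$ close to $2N$; it says nothing that puts $m_iT_i$ near a common multiple of $N$. When $T_i/\hat{\mu}(x_i)\neq T_1/\hat{\mu}(x_1)$, the iterates whose action is close to that of $\tilde{x}_1^{2m_1}$ have $|j-2m_i|$ of order $N$. So bounded action does not force $|j-2m_i|\le\bar m$. Those iterates must be excluded by index, as in Lemma \ref{crucial lemma2}(ii).

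Second, for $j=2m_i$ you do not need crossing energy that grows with the iteration, and local maximality does not give it. The Crossing Energy Theorem yields only a uniform $\sigma>0$, and your rescaling sketch supplies no mechanism for divergence. The paper instead uses the dichotomy of Lemma \ref{crucial lemma2}(i), taken from \cite{CGGM26}. Because $2m_i\hat{\mu}(x_i)$ and $2m_1\hat{\mu}(x_1)$ are both close to $2N$, the action difference between $\tilde{x}_i^{2m_i}$ and $\tilde{x}_1^{2m_1}$ is one of two things:
\begin{itemize}
\item less than $\sigma$, in which case no cylinder exists, since any Floer cylinder asymptotic to $\tilde{x}_1^{2m_1}$ has energy at least $\sigma$;
\item or tending to infinity with $N$.
\end{itemize}

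Finally, your parity fallback does not work. The orbit $x_i$ may be degenerate, so its local homology need not sit in two adjacent degrees, and the parity of $C(M_i)$ is not controlled.
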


As \cite{CGGM26} has shown, Theorem \ref{isoinvar} can be immediately obtained from this lemma and Theorem 4.3 in that paper.

\begin{proof} Since $n(\hat{x}_1^{2m_1},\check{x}_1^{2m_1})=0$ and the index of constant orbits and the periodic orbits with non-positive mean index are bounded from above, as in \cite{CGGM26}, it suffices to show Lemma \ref{crucial lemma2} holds for either $\hat{x}_1^{2m_1}$ or $\check{x}_1^{2m_1}$, with the support in (iv) replaced by $[\mu_-(x_i^j),\mu_+(x_i^j)+1]$ as in (ii) and (iii).

So let's focus on $x_1,x_2,\cdots,x_r$.
In the case of $j=2m_i$ and $|j-2m_i|>\bar{m}$, we can argue similarly as in \cite{CGGM26} to show that, for both $\hat{x}_1^{2m_1}$ and $\check{x}_1^{2m_1}$, the action difference is arbitrarily large or too small and the degree difference is very large, respectively. Now we focus on the case of $1\le|j-2m_i|\le\bar{m}$ to show that, for either $\hat{x}_1^{2m_1}$ or $\check{x}_1^{2m_1}$, the degree difference is greater than or equal to 2.

Indeed, it is enough to show that the degree distance to $[\mu_-(x_i^j),\mu_+(x_i^j)+1]$ is greater than or equal to 2 by (\ref{upper index}) and (\ref{lower index}). Applying Theorem 3.5 to closed orbits $x_1,x_2,\cdots,x_r$, we get by the index assumption that for $2\le m\le \ol{m}$
\be\mu_-(x_i^{2m_i+m})>\mu_-(x_i^{2m_i+1})=2N+\mu_-(x_i)\ge 2N+n+1,\lb{app_2m_i+m}\ee
\be\mu(x_1^{2m_1})=2N-(C(M_1)-2\Delta_1)\in[2N-n+1,2N+n-1]\lb{app_2m_1} \ee
and
\bea
\mu_+(x_i^{2m_i-m})<\mu_+(x_i^{2m_i-1})&=&2N-\mu_-(x_i)-2S^+_{M_i}(1)+\nu(x_i)\nn\\
                                       &=&2N-\mu_-(x_i)-2(p_{-,i}+p_{0,i})+(p_{-,i}+2p_{0,i}+p_{+,i})\nn\\
                                       &=&2N-\mu_-(x_i)-p_{-,i}+p_{+,i}\le 2N-2,\lb{app_2m_i-m}
\eea
where $M_i$ is the linearized Poincar\'{e} map of $x_i$, the first inequalities of (\ref{app_2m_i+m}) and (\ref{app_2m_i-m}) come from Proposition \ref{large index growth}, the second equality comes from (\ref{basic normal}) and Proposition \ref{splitting number} and the last inequality comes from (\ref{sum_n-1}).

Now we will further narrow the range of $\mu_-(x_i^{2m_i})$ for some tuples obtained in the common index jump theorem. This relies on a symmetric argument in the proof of common index jump theorem (cf. \cite{DLW16, HW22}).

More precisely, we can obtain from Theorem \ref{CIJT} a new tuple $(N', m_1',\cdots,m_q')$ such that similar equalities to (\ref{app_2m_i+m})-(\ref{app_2m_i-m}) hold with the two term $\Delta_i$ and $\Delta_i'$ being related by the following equality (see the equality (42) in Theorem 2.8 of \cite{HW22}) \be\Delta_i+\Delta_i'=C(M_i).\ee
In particular, \be \mu(x_1^{2m_1'})=2N'-C(M_1)+2\Delta_1'=2N'+C(M_1)-2\Delta_1.\lb{2m_1'}\ee
Therefore, it follows from (\ref{app_2m_1}) and (\ref{2m_1'}) that either
\be\mu(x_1^{2m_1})\in[2N,2N+n-1]\nn\ee or \be\mu(x_1^{2m_1'})\in[2N',2N'+n-1].\nn\ee
Without loss of generality, we assume $\mu(x_1^{2m_1})\in[2N,2N+n-1]$. Then (\ref{generator degree}) and (\ref{app_2m_i+m}) imply  \be|\check{x}_1^{2m_1}|=\mu(x_1^{2m_1})\le\mu_-(x_i^{2m_i+1})-2<\mu_-(x_i^{2m_i+m})-2.\ee
If $\mu_+(x_i^{2m_i-1})\le 2N-3$, then by (\ref{app_2m_i-m}),
\be\mu_+(x_i^{2m_i-m})+1<\mu_+(x_i^{2m_i-1})+1\le \mu(x_1^{2m_1})-2=|\check{x}_1^{2m_1}|-2,\lb{distance 2m_i-m}\ee
which implies that the degree distance from $\check{x}_1^{2m_1}$ to $[\mu_-(x_i^j),\mu_+(x_i^j)+1]$ is greater than or equal to 2 for all $1\le|j-2m_i|\le\bar{m}$. Otherwise, (\ref{distance 2m_i-m}) still holds for $\check{x}_1^{2m_1}$ when $\mu(x_1^{2m_1})\ge2N+1$.

Hence, it remains to consider the case of
\be\mu_+(x_i^{2m_i-1})+1= 2N-1=\mu(x_1^{2m_1})-1\lb{aspt},\ee
which implies that the degree distance from $\hat{x}_1^{2m_1}$ to $[\mu_-(x_i^j),\mu_+(x_i^j)+1]$ is greater than or equal to 2 for all $1\le|j-2m_i|\le\bar{m}$. To be specific, there now holds
\be|\hat{x}_1^{2m_1}|=\mu(x_1^{2m_1})+1=2N+1\le\mu_-(x_i^{2m_i+1})-2<\mu_-(x_i^{2m_i+m})-2\ee
and \be\mu_+(x_i^{2m_i-m})+1<\mu_+(x_i^{2m_i-1})+1\le|\hat{x}_1^{2m_1}|-2.\ee
This completes the proof of Lemma \ref{crucial lemma}, and thus the proof of Theorem \ref{isoinvar}.
\end{proof}

%
%

\section{Acknowledgements}
The authors thank Erman \c{C}ineli and Viktor Ginzburg for many careful explanations of \cite{CGGM26} and some valuable suggestions.
\bibliographystyle{abbrv}

\begin{thebibliography}{100}

\bibitem{BO09} F. Bourgeois, A. Oancea, Symplectic homology, autonomous Hamiltonians, and Morse-Bott moduli spaces. {\it Duke Math. J.} 146, 71-174 (2009).
\bibitem{BO13} F. Bourgeois, A. Oancea, The Gysin exact sequence for $S^1$-equivariant symplectic homology. {\it J. Topol. Anal.} 5, 361-407 (2013).
\bibitem{CGG25} E. \c{C}ineli, V. L. Ginzburg, B. Z. G\"{u}rel, Closed Orbits of Dynamically Convex Reeb Flows: Towards the HZ- and Multiplicity Conjectures. Preprint arXiv:2410.13093v4 (2026).
\bibitem{CGG26} E. \c{C}ineli, V. L. Ginzburg, B. Z. G\"{u}rel, Filtered Symplectic Homology and Closed Reeb Orbits. Preprint arXiv:2606.13372.
\bibitem{CGGM26} E. \c{C}ineli, V. L. Ginzburg, B. Z. G\"{u}rel, M. Mazzucchelli, Invariant sets and hyperbolic closed Reeb orbits {\it Adv. Math.} 484, Paper No. 110709, 57 pp (2026).
\bibitem{CHHL24}D. Cristofaro-Gardiner, U. Hryniewicz, M. Hutchings, H. Liu, Proof of Hofer-Wysocki-Zehnder's two or infinity conjecture. To appear in {\it J. Amer. Math. Soc.} (2026).
\bibitem{DLW16} H. Duan, Y. Long, W. Wang, The enhanced common index jump theorem for symplectic paths and
 non-hyperbolic closed geodesics on Finsler manifolds.  {\it Calc. Var. and PDEs.} 55, Art. 145, 28 pp (2016).
\bibitem{DLLW24} H. Duan, H. Liu, Y. Long, W. Wang, Generalized common index jump theorem with applications to closed
characteristics on star-shaped hypersurfaces and beyond. {\it J. Funct. Anal.} 286, Paper No. 110352, 41 pp (2024).
\bibitem{DLLQW25} H. Duan, H. Liu, Y. Long, Z. Qi, W. Wang, Three closed characteristics on
non-degenerate star-shaped hypersurfaces in $\R^6$. {\it J. Differ. Equ.} 446, Paper No. 113605, 30 pp (2025).
\bibitem{DLR22}H. Duan, H. Liu, W. Ren, A dichotomy result for closed characteristics on compact star-shaped hypersurfaces in $\R^{2n}$. {\it Math. Z.} 302, 743-757 (2022).
\bibitem{DQ26} H. Duan, Z. Qi, Closed Reeb orbits on contact type hypersurfaces in $T^*S^n$. Preprint 	arXiv:2603.07894.
\bibitem{DX25} H. Duan, D. Xie, Four closed characteristics on compact star-shaped hypersurfaces in $\R^8$. {\it J. Geom. Anal. }35, Paper No. 289, 14 pp (2025).
\bibitem{F21} E. Fender, Two Perspectives on the Local Symplectic Homology of closed Reeb Orbits, PhD Thesis, UCSC, 2021.
\bibitem{F89a} A. Floer, Symplectic fixed points and holomorphic spheres. {\it Comm. Math. Phys.} 120, 575-611 (1989).
\bibitem{F89b} A. Floer, Witten's complex and infinite-dimensional Morse theory. {\it J. Differential Geom.} 30, 207-221 (1989).
\bibitem{G10} V. L. Ginzburg, The Conley conjecture. {\it Ann. Math.} 172, 1127-1180 (2010).
\bibitem{GG20} V. L. Ginzburg, B. Z. G\"{u}rel, Lusternik-Schnirelmann theory and closed Reeb orbits. {\it Math. Z.} 295, 515-582 (2020).
\bibitem{GHHM13} V. L. Ginzburg, D. Hein, U. L. Hryniewicz, L. Macarini, Closed Reeb orbits on the sphere and symplectically degenerate maxima. {\it Acta Math. Vietnam.} 38, 55-78 (2013).
\bibitem{HW22} M. Hamid, W. Wang, A symmetric property in the enhanced common index jump theorem with
applications to the closed geodesic problem. {\it Discrete Contin. Dyn. Syst.} 42, 1933-1948 (2022).
\bibitem{HWZ03} H. Hofer, K. Wysocki, E. Zehnder, Finite energy foliations of
tight three-spheres and Hamiltonian dynamics. {\it Ann. Math.} 157, 125-255 (2003).
%
\bibitem{LLW25} Xiaorui Li, Hui Liu, Wei Wang, Two irrationally elliptic closed orbits of Reeb flows on the boundary of star-shaped domain in $\R^{2n}$. Preprint arXiv:2510.06597.
%
\bibitem{L00} Y. Long,  Precise iteration formulae of the Maslov-type index
theory and ellipticity of closed characteristics.  {\it Adv. Math.} 154, 76-131 (2000).
\bibitem{L02} Y. Long,  Index Theory for Symplectic Paths with Applications.
Progress in Math. 207, Birkh\"auser (2002).
%
\bibitem{LZ02} Y. Long, C. Zhu, Closed charateristics on compact convex hypersurfaces
in $\R^{2n}$. {\it Ann. Math.} 155, 317-368 (2002).

\bibitem{W22} W. Wang, Irrationally elliptic closed characteristics on compact convex hypersurfaces in $\R^{2n}$. {\it
 J. Funct. Anal.} 282, Paper No. 109269, 29 pp (2022).
\bibitem{WHL07} W. Wang, X. Hu, Y. Long, Resonance identity, stability and multiplicity of closed
characteristics on compact convex hypersurfaces. {\it Duke Math. J.} 139, 411-462 (2007).
\end{thebibliography}

\end{document}